\definecolor{cobalt}{RGB}{61,99,181}
\newtheorem{thm}{Theorem}[section]
\newtheorem{lem}[thm]{Lemma}
\newtheorem{rem}[thm]{Remark}
\numberwithin{equation}{section}
\date{\today}
\newcommand{\Rmnum}[1]{\expandafter\@slowromancap\romannumeral #1@}
\def\D{\mathbb{D}}
\def\T{\mathbb{T}}
\newcommand{\Z}{\mathbb{Z}}
\newcommand{\C}{\mathbb{C}}
\begin{document}
\title[Commuting Slant Toeplitz operators]{Commuting Slant Toeplitz operators on the Bergman space}

\author[Haiying Zhang]{Haiying Zhang}
\address{
\textsuperscript{1}
 College of Mathematics and Statistics, Chongqing University, Chongqing, 401331, P. R. China}
\email{zhy9299@126.com}

\date{\today}

\keywords{Toeplitz operator, Slant Toeplitz operator, Bergman space, commutativity}

\subjclass[2010]{47B35}

\begin{abstract}
This paper shows that on the Bergman space of the open unit disk, the  slant Toeplitz operator $T_{\overline{p}+\varphi}$ and $T_{\overline{p}+\psi}$ commute if and only if $\varphi=\psi$, where $\varphi$ and $\psi$ are both bounded analytic functions, and $p$ is an analytic polynomial.

\end{abstract}

\maketitle

\section{Introduction}

Let $\varphi(z)=\sum\limits_ {n=-\infty}^{\infty}\hat{\varphi}_nz^n$ be a bounded measurable function on the unit circle $\T$, where $\hat{\varphi}_n=\left<\varphi,z^n\right>$ is the $n$-th Fourier series of $\varphi$ and $\{z^n\}_{n=-\infty}^{\infty}$ is the basis of $L^2(\T)$. The slant Toeplitz operator $A_\varphi$ in $L^2(\T)$ is given by the following matrix
$$
\left( \begin{array} {c c c c c c c} 
{{\ddots}} & {{}} & {{}} & {{}} & {{}} & {{}} & {{}} \\ 
{{\cdots}} & {{a_{-2}}} & {{a_{-3}}} & {{\cdots}} & {{}} & {{}} & {{}} \\ 
{{\cdots}} & {{a_{0}}} & {{a_{-1}}} & {{a_{-2}}} & {{\cdots}} & {{}} & {{}} \\ 
{{}} & {{\cdots}} & {{a_{1}}} & {{a_{0}}} & {{a_{-1}}} & {{}} & {{}} \\ 
{{}} & {{}} & {{\cdots}} & {{a_{2}}} & {{a_{1}}} & {{a_{0}}} & {{\cdots}} \\ 
{{}} & {{}} & {{}} & {{\cdots}} & {{a_{3}}} & {{a_{2}}} & {{\cdots}} \\ 
{{}} & {{}} & {{}} & {{}} & {{\ddots}} & {{}} & {{\ddots}} \\ 
\end{array} \right). 
$$
Let $W$ be the bounded linear operators on $L^2(\T)$ such that 
$$Wz^{2n}=z^n,\,Wz^{2n+1}=0,\,n\in \Z.$$
Mark C. Ho characterized the relation between $A_\varphi$ and $W$, that is $A_{\varphi}=WM_{\varphi}$, where $M_\varphi$ is the multiplication operator.

The Bergman space $L_a^2$ is the closed subspace of $L^2(\D,dA)$ consisting of analytic functions on $\D$, so the Bergman space $L_a^2$ on the unit disk $\D=\{z\in \C:|z|<1\}$ is defined as 
$$L_a^2(\D)=\Big\{f~\text{is analytic on }\D : \|f\|^2=\int_{\D}|f(z)|^2dA<\infty\Big\},$$
where $dA$ denotes the normalized area measure. 
The Bergman space $L_a^2$ possesses a natural orthonormal basis $\{1,\sqrt{2}z,\cdots,\sqrt{n+1}z^n,\cdots\}.$
For a function 
$$f(z)=\sum_{n=0}^{\infty}a_nz^n\in L_a^2,$$
its norm is given by
$$\|f\|^2=\sum_{n\geq 0}\frac{|a_n|^2}{n+1}.$$

For $\varphi \in L^\infty(\D,dA)$, we define the slant Toeplitz operator $B_\varphi$ with symbol $\varphi$ on the Bergman space as 
$$B_{\varphi}f=WT_{\varphi}f,$$
where $W$ on $L_a^2$ is defined by 
$$Wz^{2n}=z^n,\,Wz^{2n+1}=0,\,n\geq0.$$
And $T_\varphi f=P(\varphi f)$ is the Toeplitz operator on the Bergman space $L_a^2$, in which $P$ is the orthogonal projection from $L^2(\D,dA)$ onto $L_a^2$.

In recent years, Brown and Halmos \cite{[1]} described the commuting Toeplitz operators on the Hardy space of the unit disk, while Axler and Čučković \cite{[2]} characterized the commuting Toeplitz operators with harmonic symbols on the Bergman space of the unit disk in 1991. The resulting Brown-Halmos type problems have since remained a central research focus in operator theory. Over the past two decades, beyond classical Toeplitz operators, researchers have extended the investigation of Brown-Halmos type theorems to various other operator classes. In \cite{[3]}, Singh and Gupta introduced the concepts of slant Toeplitz operators and $k$-th order slant Toeplitz operators on Fock spaces, derived their Berezin transforms, and established Brown-Halmos type theorems for $k$-th order slant Toeplitz operators. Liu Chaomei and Lu Yufeng \cite{[4]} studied the products and commutativity of slant Toeplitz operators on the Bergman space $L_a^2$, establishing Brown-Halmos type theorems for $k$-th order slant Toeplitz operators with harmonic polynomial, analytic, and conjugate analytic symbols in \cite{[5]}. Additionally, they proved the equivalence of commutativity and essential commutativity for slant Toeplitz operators in \cite{[6]}, and investigated algebraic, spectral, and commutativity properties of $ k$-th order slant Toeplitz operators on $L_a^2$. In \cite{[7]}, Singh and Sharma investigated generalized slant Toeplitz operators with symbols in the derivative Hardy space $S^{2}(\mathbb{D})$. Subsequently, Datt and Pandey \cite{[8]} extended the study of commutativity and essential commutativity relations for $k$-th order multivariate slant Toeplitz operators on $L^{2}(\mathbb{T}^{n})$. Furthermore, Datta \cite{[9]} generalized these results to $k$-th order weighted slant Toeplitz operators on weighted sequence spaces.

These developments motivate our further investigation into the commutativity of slant Toeplitz operators with harmonic symbols on the Bergman space. Notably, slant Toeplitz operators with harmonic symbols on the Bergman space exhibit structural parallels to their counterparts on the Hardy space. Recalling that the harmonic extension \cite{[10]} provides a natural correspondence between bounded functions on the unit circle and bounded harmonic functions on the unit disk, slant Toeplitz operators with harmonic symbols on the Bergman space inherit analogous properties to those on the Hardy space. This observation drives our systematic study of commutativity for slant Toeplitz operators with harmonic symbols in the Bergman space framework.

\section{Preliminary}
\begin{lem}
$W^*Wz^{2n}=W^*z^n=\frac{2n+1}{n+1}z^{2n}$ for $n=0,1,2,\cdots.$
\end{lem}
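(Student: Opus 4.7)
The plan is a direct computation using the definition of the adjoint and the Bergman inner product. The essential input is that the monomials are orthogonal but \emph{not} orthonormal: one has $\langle z^j, z^k\rangle = \frac{1}{k+1}\delta_{jk}$, since $\{\sqrt{n+1}\,z^n\}_{n\ge 0}$ is the orthonormal basis. All the arithmetic in the lemma traces back to this normalization factor, so I would begin by recording it explicitly.

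Next I would compute $W^*z^n$ by expanding it in the monomial basis and testing against $z^k$. Writing $W^*z^n = \sum_{k\ge 0} c_k z^k$, the defining relation $\langle z^k, W^*z^n\rangle = \langle W z^k, z^n\rangle$ gives $\frac{\overline{c_k}}{k+1} = \langle Wz^k, z^n\rangle$. By the definition of $W$, the right-hand side vanishes when $k$ is odd, and when $k=2j$ it equals $\langle z^j, z^n\rangle = \frac{1}{n+1}\delta_{jn}$. Thus the only nonzero coefficient occurs at $k=2n$, and $\frac{c_{2n}}{2n+1} = \frac{1}{n+1}$, yielding $c_{2n} = \frac{2n+1}{n+1}$. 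Hence $W^*z^n = \frac{2n+1}{n+1}z^{2n}$.

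Finally, chaining this with $Wz^{2n}=z^n$ gives $W^*Wz^{2n} = W^*z^n = \frac{2n+1}{n+1}z^{2n}$, which is the claimed identity.

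There is no real obstacle here; the only point demanding care is to avoid confusing the monomial basis with the orthonormal basis, so that the factor $\frac{2n+1}{n+1}$ (rather than, say, $1$) emerges correctly. One could alternatively verify the formula by working in the orthonormal basis $e_n = \sqrt{n+1}\,z^n$: the relation $We_{2n} = \sqrt{\frac{2n+1}{n+1}}\,e_n$ (and $We_{2n+1}=0$) gives $W^*e_n = \sqrt{\frac{2n+1}{n+1}}\,e_{2n}$, which translates back to the same formula and serves as a useful sanity check.
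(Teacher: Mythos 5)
Your computation is correct: the key normalization $\langle z^j,z^k\rangle=\frac{1}{k+1}\delta_{jk}$ is exactly what produces the factor $\frac{2n+1}{n+1}$, and the paper itself states this lemma in the Preliminary section without supplying a proof, so your adjoint calculation is precisely the standard argument the author is implicitly relying on. The orthonormal-basis sanity check is a nice touch and confirms the same formula.
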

\begin{lem}
For $f\in L_a^2(\D)$, $W^*z^n=\frac{2n+1}{n+1}z^{2n} (n=0,1,2,\cdots)$ and $\|f\|_2\leq \|W^*f\|_2\leq \sqrt{2}\|f\|_2$.
\end{lem}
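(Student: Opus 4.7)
The identity $W^{*}z^{n}=\frac{2n+1}{n+1}z^{2n}$ is the second equality in Lemma~2.1, so I would either cite it directly or reprove it in one line by testing against the monomial basis: starting from $\langle W^{*}z^{n},z^{k}\rangle=\langle z^{n},Wz^{k}\rangle$, the defining relations $Wz^{2m}=z^{m}$ and $Wz^{2m+1}=0$ force this inner product to vanish unless $k=2n$, and the Bergman normalization $\|z^{j}\|_{2}^{2}=1/(j+1)$ then pins down the coefficient $\tfrac{2n+1}{n+1}$ uniquely.

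The substantive part is the two-sided norm estimate. My plan is to write $f=\sum_{n\geq 0}a_{n}z^{n}$, apply $W^{*}$ term by term, and exploit the orthogonality of the family $\{z^{2n}\}_{n\geq 0}$ in $L_{a}^{2}(\D)$ to obtain
\begin{equation*}
\|W^{*}f\|_{2}^{2}=\sum_{n=0}^{\infty}\left(\frac{2n+1}{n+1}\right)^{2}|a_{n}|^{2}\,\|z^{2n}\|_{2}^{2}=\sum_{n=0}^{\infty}\frac{2n+1}{(n+1)^{2}}\,|a_{n}|^{2}.
\end{equation*}
Since $\|f\|_{2}^{2}=\sum_{n\geq 0}\frac{|a_{n}|^{2}}{n+1}$, the $n$th term of $\|W^{*}f\|_{2}^{2}$ is the $n$th term of $\|f\|_{2}^{2}$ multiplied by the factor $\frac{2n+1}{n+1}$. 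An elementary monotonicity check gives $1\leq\frac{2n+1}{n+1}<2$ for every $n\geq 0$ (equality on the left at $n=0$, and $\frac{2n+1}{n+1}\to 2$ as $n\to\infty$), so the desired bounds $\|f\|_{2}\leq\|W^{*}f\|_{2}\leq\sqrt{2}\,\|f\|_{2}$ follow termwise.

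There is essentially no obstacle here. The only detail to mind is that the Bergman normalization $\|z^{j}\|_{2}^{2}=1/(j+1)$ is what produces the exact factor $\frac{2n+1}{n+1}$ both in the formula for $W^{*}z^{n}$ and in the weighted comparison between $\|W^{*}f\|_{2}^{2}$ and $\|f\|_{2}^{2}$; any slip in this weight would change the sharp upper constant $\sqrt{2}$, so I would be careful to write the norms explicitly in the orthogonal (rather than orthonormal) basis $\{z^{n}\}$ throughout.
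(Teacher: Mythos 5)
Your proposal is correct: the adjoint computation against the monomial basis with the Bergman weights $\|z^j\|_2^2=1/(j+1)$ gives exactly $W^*z^n=\frac{2n+1}{n+1}z^{2n}$, and the termwise comparison via the factor $\frac{2n+1}{n+1}=2-\frac{1}{n+1}\in[1,2)$ yields both bounds. The paper states this lemma in its Preliminary section without any proof, and your argument is the standard one it implicitly relies on; the only detail worth a passing remark is that applying $W^*$ term by term to the series is legitimate because $W$ (hence $W^*$) is bounded.
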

\begin{lem}
If $\varphi \in H^{\infty}(\D)$, then $T_\varphi W=WT_{\varphi(z^2)}$.
\end{lem}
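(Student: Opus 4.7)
The plan is to verify the operator identity $T_\varphi W = W T_{\varphi(z^2)}$ by evaluating both sides on the orthonormal basis $\{\sqrt{k+1}\,z^k\}_{k \geq 0}$ of $L_a^2(\D)$, which suffices by linearity and density because both sides are bounded linear operators. The crucial simplification is that for $\varphi \in H^\infty(\D)$ the Bergman-space Toeplitz operator acts as simple multiplication: $T_\varphi f = P(\varphi f) = \varphi f$ for every $f \in L_a^2(\D)$, since $\varphi f$ is already analytic. The same remark applies to $T_{\varphi(z^2)}$, as $\varphi(z^2) \in H^\infty(\D)$.

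First I would write $\varphi(z) = \sum_{j \geq 0} a_j z^j$, so that $\varphi(z^2) = \sum_{j \geq 0} a_j z^{2j}$. Then I would split into cases according to the parity of $k$. For $k = 2n$, on the left-hand side $W z^{2n} = z^n$, so $T_\varphi W z^{2n} = \varphi(z)\, z^n = \sum_j a_j z^{n+j}$. On the right-hand side, $T_{\varphi(z^2)} z^{2n} = \varphi(z^2)\, z^{2n} = \sum_j a_j z^{2(n+j)}$, and applying $W$ term by term (each exponent $2(n+j)$ is even) yields $\sum_j a_j z^{n+j}$. The two expressions coincide.

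For $k = 2n+1$, the left-hand side gives $T_\varphi W z^{2n+1} = \varphi \cdot 0 = 0$. On the right-hand side, $T_{\varphi(z^2)} z^{2n+1} = \sum_j a_j z^{2(n+j)+1}$, and since every exponent appearing is odd, $W$ annihilates each term, producing $0$ as well. Hence $T_\varphi W z^k = W T_{\varphi(z^2)} z^k$ for every $k \geq 0$, which yields the desired operator identity.

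I do not expect any genuine obstacle here; the only subtle point worth handling cleanly is justifying the termwise application of $W$ to the series $\sum_j a_j z^{2(n+j)}$ (respectively $\sum_j a_j z^{2(n+j)+1}$). This is immediate once one observes that the partial sums converge in the Bergman norm and $W$ is a bounded operator, so termwise evaluation is legitimate. After that, matching the closed forms on even and odd basis vectors completes the proof.
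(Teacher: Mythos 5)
Your proof is correct: the identity is verified on the dense span of the monomials, using that $T_\varphi$ and $T_{\varphi(z^2)}$ act by multiplication for analytic symbols and that $W$ is bounded (which follows from the norm estimate in Lemma 2.2), and the even/odd case computation matches both sides. The paper states this lemma in its preliminaries without any proof, and your argument is exactly the standard verification it implicitly relies on.
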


\section{Commuting Slant Toeplitz operators on the Bergman space}
\begin{lem}\label{a}
Let $f=a\overline{z}+\varphi,\,g=b\overline{z}+\psi$, $\varphi$ and  $\psi$ be two bounded analytic functions on $\mathbb D$, $a,b\in\mathbb{R}$. There is a constant $c$ such that $B_fB_g=B_gB_f$ if and only if $f=cg$.
\end{lem}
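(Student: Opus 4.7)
Sufficiency is immediate: $f=cg$ gives $B_f=cB_g$, which commutes with $B_g$. For necessity, decompose $B_f = aB_{\bar z} + B_\varphi$ and $B_g = bB_{\bar z} + B_\psi$, so that
\[
[B_f,B_g] = a[B_{\bar z},B_\psi] - b[B_{\bar z},B_\varphi] + [B_\varphi,B_\psi].
\]
The plan is to apply $B_fB_g - B_gB_f$ to the basis monomials $z^k$, expand the image as a power series, and match coefficients to extract a countable family of linear relations between $a,b$ and the Taylor coefficients $\alpha_n,\beta_n$ of $\varphi,\psi$, from which $f=cg$ is deduced. Writing $\varphi(z)=\sum_{n\ge 0}\alpha_n z^n$ and using $T_{\bar z}z^k = \tfrac{k}{k+1}z^{k-1}$ together with the definition of $W$, I first record the building blocks
\begin{align*}
B_f z^{2m} &= \sum_{k\ge 0}\alpha_{2k}\, z^{k+m}, \\
B_f z^{2m+1} &= \tfrac{(2m+1)a}{2(m+1)}\, z^m + \sum_{k\ge 0}\alpha_{2k+1}\, z^{k+m+1},
\end{align*}
with the analogous formulas for $B_g$. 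Note that only odd inputs activate the $\bar z$-component.

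I would next isolate the purely analytic case $a=b=0$, where Lemma 2.3 gives the clean identity $B_\varphi B_\psi = WT_\varphi WT_\psi = W^2 T_{\varphi(z^2)\psi(z)}$ and similarly $B_\psi B_\varphi = W^2 T_{\psi(z^2)\varphi(z)}$. A short calculation on monomials shows that $W^2 T_h = 0$ on $L_a^2$ forces an analytic $h$ to vanish (choose $z^j$ with each residue of $j$ modulo $4$ to isolate each Taylor coefficient of $h$), so the commutator vanishes iff
\[
\varphi(z^2)\,\psi(z) = \psi(z^2)\,\varphi(z).
\]
Writing $r = \psi/\varphi$ this reads $r(z)=r(z^2)$; iterating, $r(z)=r(z^{2^n})\to r(0)$ on $\D$, so $r$ is constant and $\psi = c\varphi$.

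For the general case, I would substitute the above formulas into $(B_fB_g-B_gB_f)z^k = 0$ and collect the coefficient of each $z^n$. The coefficient of $z^0$ in the image of $1$ already yields $a\beta_2 = b\alpha_2$, and higher-order coefficients couple the $\alpha_n,\beta_n$ through finite convolutions. Proceeding inductively in $k$ and $n$, these relations should force $a\beta_n = b\alpha_n$ for every $n\ge 0$. When $b\ne 0$, setting $c=a/b$ then gives $\varphi=c\psi$ and hence $f=cg$; the degenerate subcases ($b=0$ with $a\ne 0$, or vice versa) are handled by the same identities, which force $\psi\equiv 0$ or $\varphi\equiv 0$ and reduce to trivial proportionality. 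The main obstacle is the combinatorics of this step: the factor $\tfrac{2m+1}{2(m+1)}$ coming from $T_{\bar z}$ intertwines even- and odd-indexed Taylor coefficients, so the infinite system of equations must be organised so that each new equation isolates exactly one new ratio $\alpha_n/\beta_n$. The iteration $r(z)=r(z^{2^n})$ from the analytic case is what structurally forces proportionality rather than some more exotic functional relation, and it serves as a consistency check on the inductive bookkeeping.
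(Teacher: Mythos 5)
Your sufficiency argument and your building-block formulas for $B_f z^{2m}$ and $B_f z^{2m+1}$ are correct, and your treatment of the purely analytic subcase $a=b=0$ is actually a genuinely different and cleaner route than anything in the paper: reducing the commutator to $W^2T_{\varphi(z^2)\psi-\psi(z^2)\varphi}=0$, observing that $W^2T_h=0$ kills every Taylor coefficient of an analytic $h$ (by running $k$ over the residues mod $4$), and then solving the functional equation $r(z)=r(z^{2^n})\to r(0)$ is short and conceptual, whereas the paper defers its analytic subcase to an unpublished preprint. (Do note that if $\varphi$ or $\psi$ vanishes at $0$ you need to first match the orders of vanishing by comparing lowest-degree terms before forming $r=\psi/\varphi$.)

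However, the main case --- $a$ or $b$ nonzero --- is where the entire difficulty of the lemma lives, and your proposal leaves it at ``these relations should force $a\beta_n=b\alpha_n$.'' That is a genuine gap, and moreover the mechanism you sketch (``each new equation isolates exactly one new ratio $\alpha_n/\beta_n$'') is not how the system resolves. When you match the coefficient of $z^s$ in $(B_fB_g-B_gB_f)z^{k}=0$, a single equation does \emph{not} isolate one ratio: it produces a full convolution sum $a_{2s-k}b_0+a_{2s-k-1}b_2+\cdots+a_0b_{4s-2k}$ on one side against its transpose $a_0b_{2s-k}+\cdots+a_{4s-2k}b_0$ on the other, plus isolated terms such as $\tfrac{2s+1}{2s+2}ab_{4s-2k+2}$ versus $\tfrac{2s+1}{2s+2}a_{4s-2k+2}$. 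The key observation --- which is the heart of the paper's proof and is absent from your plan --- is that the convolution sums depend only on the combination $2s-k$, while the isolated terms carry coefficients like $\tfrac{2s+1}{2s+2}$ depending on $s$ alone. One therefore takes two (or, for the odd-index coefficients, three) choices of $(s,k)$ with $2s-k$ held fixed and subtracts: the convolution parts cancel identically, and what remains is either a single relation $ab_{4t+2}=a_{4t+2}$ or a $2\times 2$ homogeneous linear system whose coefficient matrix one checks is invertible, yielding $a_{n}=ab_{n}$ for all $n$ (in the paper's normalization $b=1$). Without this cancellation device, or something equivalent, the inductive bookkeeping you describe does not close, because each equation couples infinitely many products $a_ib_j$ at once. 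You should also be explicit about the degenerate subcase $b=0$, $a\neq 0$: there your relations force $\psi\equiv 0$, hence $g=a^{-1}\cdot 0\cdot\bar z$ need not satisfy $f=cg$ unless $f=0$, so the ``only if'' direction as stated requires $g\neq 0$ (a point the lemma itself glosses over).
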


\begin{proof}
Let us show the sufficiency first. Suppose that $f=cg$,  the result is trivial. 
To prove the necessity, we divide the proof into two steps.

\textbf{Step 1.}
We first consider the case where $a\neq 0$ and $b=1$, and assume that
$$\varphi(z)=\sum_{n=0}^{\infty}a_n z^n, \ \ \ \ \ \psi(z)=\sum_{m=0}^{\infty}b_m z^m.$$

Note the equality $B_fB_g=B_gB_f$ is equivalent to 
$$B_\varphi B_\psi z^{k}+aB_{\overline{z}}B_{\psi}z^{k}+ B_\varphi B_{\overline{z}} z^{k}= B_\psi B_\varphi z^{k}+aB_{\psi}B_{\overline{z}}z^{k}+B_{\overline{z}}B_{\varphi}z^{k}$$
for every  nonnegative integer $k$. Thus, we have
\begin{align*}
\begin{cases}
 B_\varphi B_\psi z^{k}=WT_\varphi W T_\psi z^{k}=WWT_{\varphi(z^2)}T_\psi z^{k}=WWT_{\varphi(z^2)\psi}z^{k},\\
 aB_{\overline{z}}B_\psi z^{k}=aW T_{\overline{z}}W T_\psi z^{k}, \\
 B_\varphi B_{\overline{z}}z^{k}=WT_\varphi W T_{\overline{z}}z^{k} =WWT_{\varphi(z^2)}T_{\overline{z}}z^{k}, \\
 B_\psi B_\varphi z^{k}=W T_\psi WT_\varphi z^{k}=WWT_{\psi(z^2)}T_\varphi z^{k}=WWT_{\psi(z^2)\varphi}z^{k},\\
 aB_\psi B_{\overline{z}} z^{k}=aWT_\psi W T_{\overline{z}}z^{k} =aWWT_{\psi(z^2)}T_{\overline{z}}z^{k}, \\
 B_{\overline{z}}B_\varphi z^{k}=WT_{\overline{z}} WT_{\varphi}z^k.
\end{cases}
\end{align*}
Since for every $k\geq0$, $B_\varphi B_\psi z^{k}, B_{\overline{z}}B_{\psi}z^{k}, B_\varphi B_{\overline{z}}z^{k}, B_\psi B_\varphi z^{k}, B_{\psi}B_{\overline{z}}z^{k}$ and $B_{\overline{z}}B_{\varphi}z^{k}$ are all analytic, letting 
$$B_\varphi B_\psi z^{2k}+aB_{\overline{z}}B_{\psi}z^{2k}+ B_\varphi B_{\overline{z}} z^{2k}:=\sum_{s=0}^{\infty}A_s z^s,$$ 
$$B_\psi B_\varphi z^{2k}+aB_{\psi}B_{\overline{z}}z^{2k}+B_{\overline{z}}B_{\varphi}z^{2k}:=\sum_{s=0}^{\infty}B_s z^s.$$
The definition of the operator $W$ implies that
$$WWz^{4s}=Wz^{2s}=z^s,\ \ \ \ \ s\geq0.$$
So we only need to consider the $4s$-th terms in $T_{\varphi(z^2)\psi}z^{2k},\ T_{\psi(z^2)\varphi}z^{2k},\ T_{\varphi(z^2)}T_{\overline{z}}z^{2k},\  T_{\psi(z^2)}T_{\overline{z}}z^{2k}$ and the $2s$-th terms in $T_{\overline{z}}W T_\varphi z^{2k},\ T_{\overline{z}}W T_\psi z^{2k}$.  Calculations show that
\begin{align*}
   T_{\varphi(z^2)\psi}z^{2k}=P\left(\varphi(z^2)\psi z^{2k}\right)=\sum_{n=0}^{\infty}\sum_{m=0}^{\infty}a_nb_mz^{2n+m+2k}:=\sum_{s=0}^{\infty}A_{1,s}z^s,
  \end{align*}
\begin{align*}
   aT_{\overline{z}}WT_\psi z^{2k} &=aT_{\overline{z}}W\left(\sum_{m=0}^{\infty}b_m z^m z^{2k}\right)\\
&=aT_{\overline{z}}\left(\sum_{m=0}^{\infty}b_{2m}z^{m+k}\right)\\
   &=aP\left(\overline{z}\sum_{m=0}^{\infty}b_{2m}z^{m+k}\right)\\
   &=a\sum_{m=0}^{\infty}\frac{m+k}{m+k+1}b_{2m}z^{m+k-1}\\
   &:=\sum_{s=0}^{\infty}A_{2,s}z^s,
  \end{align*}
 \begin{align*} 
 T_{\varphi(z^2)}T_{\overline{z}}z^{2k} &=T_{\varphi(z^2)}P(\overline{z}z^{2k})\\
 &=T_{\varphi(z^2)}\left(\frac{2k}{2k+1}z^{2k-1}\right)\\
 &=P\left(\sum_{n=0}^{\infty}\frac{2k}{2k+1}a_{n}z^{2n}z^{2k-1}\right)\\
&=\sum_{n=0}^{\infty}\frac{2k}{2k+1}a_{n}z^{2n+2k-1}\\
&:=\sum_{s=0}^{\infty}A_{3,s}z^s,
 \end{align*}
 \begin{align*}
 T_{\psi(z^2)\varphi}z^{2k}=P\left(\psi(z^2)\varphi z^{2k}\right)=\sum_{n=0}^{\infty}\sum_{m=0}^{\infty}a_nb_mz^{n+2m+2k}:=\sum_{s=0}^{\infty}B_{1,s}z^s; 
 \end{align*}
 \begin{align*}
 aT_{\psi(z^2)}T_{\overline{z}}z^{2k} &=aT_{\psi(z^2)}P(\overline{z}z^{2k})\\
 &=aT_{\psi(z^2)}\left(\frac{2k}{2k+1}z^{2k-1}\right)\\
 &=aP\left(\sum_{m=0}^{\infty}\frac{2k}{2k+1}b_{m}z^{2m}z^{2k-1}\right)\\      &=a\sum_{m=0}^{\infty}\frac{2k}{2k+1}b_{m}z^{2m+2k-1}\\
 &:=\sum_{s=0}^{\infty}B_{2,s}z^s, 
 \end{align*}
 \begin{align*} 
 T_{\overline{z}}W T_\varphi z^{2k} &=T_{\overline{z}}W\left(\sum_{n=0}^{\infty}a_n z^n z^{2k}\right)\\
 &=T_{\overline{z}}\left(\sum_{n=0}^{\infty}a_{2n}z^{n+k}\right)\\
 &=P\left(\overline{z}\sum_{n=0}^{\infty}a_{2n}z^{n+k}\right)\\
&=\sum_{n=0}^{\infty}\frac{n+k}{n+k+1}a_{2n}z^{n+k-1}\\
&:=\sum_{s=0}^{\infty}B_{3,s}z^s
 \end{align*}
 Using $B_\varphi B_\psi z^{k}+aB_{\overline{z}}B_{\psi}z^{k}+ B_\varphi B_{\overline{z}} z^{k}= B_\psi B_\varphi z^{k}+aB_{\psi}B_{\overline{z}}z^{k}+B_{\overline{z}}B_{\varphi}z^{k}$,  we obtain that $A_s=B_s$ for all $s\geq0$. Note that\\
$$WWT_{\varphi(z^2)}T_{\overline{z}}z^{2k}=WW\left(\sum_{n=0}^{\infty}\frac{2k}{2k+1}a_{n}z^{2n+2k-1}\right)=0,$$
$$WWT_{\psi(z^2)}T_{\overline{z}}z^{2k}=WW\left(\sum_{m=0}^{\infty}\frac{2k}{2k+1}b_{m}z^{2m+2k-1}\right)=0.$$\\
Thus, 
$$A_s=A_{1,4s}+A_{2,2s}=B_{1,4s}+B_{3,2s}=B_s.$$

Now we calculate the coefficients $A_s$ and $B_s$ defined above. Indeed, we have
\begin{align*}
  A_s&=A_{1,4s}+A_{2,2s}\\
     &=a_{2s-k}b_0+a_{2s-k-1}b_2+\cdots+a_0b_{4s-2k}+\frac{2s+1}{2s+2}ab_{4s-2k+2}\\
\end{align*}
and
\begin{align*}
  B_s&=B_{1,4s}+B_{3,2s}\\
     &=a_{0}b_{2s-k}+a_{2}b_{2s-k-1}+\cdots+a_{4s-2k}b_{0}+\frac{2s+1}{2s+2}a_{4s-2k+2}.
\end{align*}
This implies that
\begin{align}\label{5.1}
\begin{split}
  &a_{2s-k}b_0+a_{2s-k-1}b_2+\cdots+a_0b_{4s-2k}+\frac{2s+1}{2s+2}ab_{4s-2k+2}\\
  &=a_{0}b_{2s-k}+a_{2}b_{2s-k-1}+\cdots+a_{4s-2k}b_{0}+\frac{2s+1}{2s+2}a_{4s-2k+2}.
\end{split}
 \end{align}

Similarly, let 
$$B_\varphi B_\psi z^{2k+1}+aB_{\overline{z}}B_{\psi}z^{2k+1}+ B_\varphi B_{\overline{z}} z^{2k+1}:=\sum_{s=0}^{\infty}C_s z^s,$$
$$ B_\psi B_\varphi z^{2k+1}+aB_{\psi}B_{\overline{z}}z^{2k+1}+B_{\overline{z}}B_{\varphi}z^{2k+1}:=\sum_{s=0}^{\infty}D_s z^s.$$
We also need to consider $4s$-th terms in $T_{\varphi(z^2)\psi}z^{2k+1},\ T_{\psi(z^2)\varphi}z^{2k+1},\ T_{\varphi(z^2)}T_{\overline{z}}z^{2k+1},\  T_{\psi(z^2)}T_{\overline{z}}z^{2k+1}$ and the $2s$-th terms in $T_{\overline{z}}W T_\varphi z^{2k+1},\ T_{\overline{z}}W T_\psi z^{2k+1}$.  Calculations show that
\begin{align*}
T_{\varphi(z^2)\psi}z^{2k+1}=P\left(\varphi(z^2)\psi z^{2k+1}\right)=\sum_{n=0}^{\infty}\sum_{m=0}^{\infty}a_nb_mz^{2n+m+2k+1}:=\sum_{s=0}^{\infty}C_{1,s}z^s,
\end{align*}
\begin{align*}
aT_{\overline{z}}W T_\psi z^{2k+1} &=aT_{\overline{z}}W\left(\sum_{m=0}^{\infty}b_m z^m z^{2k+1}\right)\\
&=aT_{\overline{z}}\left(\sum_{m=0}^{\infty}b_{2m+1}z^{m+k+1}\right)\\
      &=aP\left(\overline{z}\sum_{m=0}^{\infty}b_{2m+1}z^{m+k+1}\right)\\
      &=a\sum_{m=0}^{\infty}\frac{m+k+1}{m+k+2}b_{2m+1}z^{m+k}\\
      &:=\sum_{s=0}^{\infty}C_{2,s}z^s,
\end{align*}
\begin{align*}
T_{\varphi(z^2)}T_{\overline{z}}z^{2k+1} &=T_{\varphi(z^2)}P(\overline{z}z^{2k+1})\\
&=T_{\varphi(z^2)}\left(\frac{2k+1}{2k+2}z^{2k}\right)\\
&=P\left(\sum_{n=0}^{\infty}\frac{2k+1}{2k+2}a_{n}z^{2n}z^{2k}\right)\\
      &=\sum_{n=0}^{\infty}\frac{2k+1}{2k+2}a_{n}z^{2n+2k}\\
      &:=\sum_{s=0}^{\infty}C_{3,s}z^s,
\end{align*}
\begin{align*}
T_{\psi(z^2)\varphi}z^{2k+1}=P\left(\psi(z^2)\varphi z^{2k+1}\right)=\sum_{n=0}^{\infty}\sum_{m=0}^{\infty}a_nb_mz^{n+2m+2k+1}:=\sum_{s=0}^{\infty}D_{1,s}z^s,
\end{align*}
\begin{align*}
aT_{\psi(z^2)}T_{\overline{z}}z^{2k+1} &=aT_{\psi(z^2)}P(\overline{z}z^{2k+1})\\
&=aT_{\psi(z^2)}\left(\frac{2k+1}{2k+2}z^{2k}\right)\\
&=aP\left(\sum_{m=0}^{\infty}\frac{2k+1}{2k+2}b_{m}z^{2m}z^{2k}\right)\\
      &=a\sum_{m=0}^{\infty}\frac{2k+1}{2k+2}b_{m}z^{2m+2k}\\
      &:=\sum_{s=0}^{\infty}D_{2,s}z^s,
\end{align*}
\begin{align*}
T_{\overline{z}}W T_\varphi z^{2k+1} &=T_{\overline{z}}W\left(\sum_{n=0}^{\infty}a_n z^n z^{2k+1}\right)\\
&=T_{\overline{z}}\left(\sum_{n=0}^{\infty}a_{2n+1}z^{n+k+1}\right)\\
&=P\left(\overline{z}\sum_{n=0}^{\infty}a_{2n+1}z^{n+k+1}\right)\\
      &=\sum_{n=0}^{\infty}\frac{n+k+1}{n+k+2}a_{2n+1}z^{n+k}\\
      &:=\sum_{s=0}^{\infty}D_{3,s}z^s.
\end{align*}
We also obtain that $C_s=D_s$ for all $s\geq0$. Therefore, 
$$C_s=C_{1,4s}+C_{2,2s}+C_{3,4s}=D_{1,4s}+D_{2,4s}+D_{3,2s}=D_s.$$
Thus, we concluded that
\begin{align*}
  C_s&=C_{1,4s}+C_{2,2s}+C_{3,4s}\\
     &=a_{2s-k-1}b_1+a_{2s-k-2}b_3+\cdots+a_0b_{4s-2k-1}+\frac{2s+1}{2s+2}ab_{4s-2k+1}+\frac{2k+1}{2k+2}a_{2s-k} \\
\end{align*}
and
\begin{align*}
  D_s&=D_{1,4s}+D_{2,4s}+D_{3,2s}\\
     &=a_{1}b_{2s-k-1}+a_{3}b_{2s-k-2}+\cdots+a_{4s-2k-1}b_{0}+\frac{2k+1}{2k+2}ab_{2s-k}+\frac{2s+1}{2s+2}a_{4s-2k+1}.
\end{align*}
This follows that
\begin{align}\label{5.2}
\begin{split}
&a_{2s-k-1}b_1+a_{2s-k-2}b_3+\cdots+a_0b_{4s-2k-1}+\frac{2s+1}{2s+2}ab_{4s-2k+1}+\frac{2k+1}{2k+2}a_{2s-k}\\
     &=a_{1}b_{2s-k-1}+a_{3}b_{2s-k-2}+\cdots+a_{4s-2k-1}b_{0}+\frac{2k+1}{2k+2}ab_{2s-k}+\frac{2s+1}{2s+2}a_{4s-2k+1}.
\end{split}
 \end{align}

Letting $s=t, k=0;\, s=t+1, k=2,$  respectively, in (\ref{5.1}), we get the following equations:
 \begin{align}\label{5.3}
 \begin{split}
  &a_{2t}b_0+a_{2t-1}b_2+\cdots+a_0b_{4t}+\frac{2t+1}{2t+2}ab_{4t+2}\\
  &=a_0b_{2t}+a_2b_{2t-1}+\cdots+a_{4t}b_0+\frac{2t+1}{2t+2}a_{4t+2};
  \end{split}
  \end{align}
\begin{align}\label{5.4}
\begin{split}
  &a_{2t}b_0+a_{2t-1}b_2+\cdots+a_0b_{4t}+\frac{2t+3}{2t+4}ab_{4t+2}\\
  &=a_0b_{2t}+a_2b_{2t-1}+\cdots+a_{4t}b_0+\frac{2t+3}{2t+4}a_{4t+2}.
  \end{split}
 \end{align} 
Subtracting Equation (\ref{5.4})  from Equation (\ref{5.3}) yields that
$$\frac{2}{(2t+2)(2t+4)}ab_{4t+2}=\frac{2}{(2t+2)(2t+4)}a_{4t+2}.$$
Since $a\neq 0$ and $\frac{2}{(2t+2)(2t+4)}\neq0$ for any $t\geq 0$, we obtain that 
\begin{align}\label{5.5}
\begin{split}
a_{4t+2}=ab_{4t+2}.
\end{split}
 \end{align}

Similarly, by setting $s=t, k=1;\, s=t+1, k=3$ in (\ref{5.1}), we obtain the following equations:
 \begin{align}\label{5.6}
 \begin{split}
  &a_{2t-1}b_0+a_{2t-2}b_2+\cdots+a_0b_{4t-2}+\frac{2t+1}{2t+2}ab_{4t}\\
  &=a_0b_{2t-1}+a_2b_{2t-2}+\cdots+a_{4t-2}b_0+\frac{2t+1}{2t+2}a_{4t};
 \end{split}
 \end{align}
 \begin{align}\label{5.7}
 \begin{split}
 &a_{2t-1}b_0+a_{2t-2}b_2+\cdots+a_0b_{4t-2}+\frac{2t+3}{2t+4}ab_{4t}\\
 &=a_0b_{2t-1}+a_2b_{2t-2}+\cdots+a_{4t-2}b_0+\frac{2t+3}{2t+4}a_{4t}.
 \end{split}
 \end{align}
Subtracting Equation (\ref{5.7})  from Equation (\ref{5.6}) yields that
$$\frac{2}{(2t+2)(2t+4)}ab_{4t}=\frac{2}{(2t+2)(2t+4)}a_{4t}.$$
Since $a\neq 0$ and $\frac{2}{(2t+2)(2t+4)}\neq0$ for any $t\geq 0$, we obtain that 
\begin{align}\label{5.8}
\begin{split}
a_{4t}=ab_{4t}.
\end{split}
 \end{align}
By (\ref{5.5}) and (\ref{5.8}), we have
\begin{align}\label{5.9}
\begin{split}
a_{2t}=ab_{2t}\ \ \mbox{for all}\  t\geq 0.
\end{split}
 \end{align}

Next, by setting $s=t+1,\,k=1;s=t+2,\,k=3; s=t+3,\,k=5$ in (\ref{5.2}), we obtain the following equations:
\begin{align}
\begin{split}
    & a_{2t}b_1+a_{2t-1}b_3+\cdots+a_0b_{4t+1}+\frac{2t+3}{2t+4}ab_{4t+3}+\frac{3}{4}a_{2t+1} \\
     &\ \ \ \ \ \ \ \ \ \ \ \ =a_1b_{2t}+a_3b_{2t-1}+\cdots+a_{4t+1}b_0+\frac{3}{4}ab_{2t+1}+\frac{2t+3}{2t+4}a_{4t+3};\label{5.10}
\end{split}
\end{align}
\begin{align}
\begin{split}
    & a_{2t}b_1+a_{2t-1}b_3+\cdots+a_0b_{4t+1}+\frac{2t+5}{2t+6}ab_{4t+3}+\frac{7}{8}a_{2t+1} \\
     &\ \ \ \ \ \ \ \ \ \ \ =a_1b_{2t}+a_3b_{2t-1}+\cdots+a_{4t+1}b_0+\frac{7}{8}ab_{2t+1}+\frac{2t+5}{2t+6}a_{4t+3};\label{5.11}
\end{split}
\end{align}
\begin{align}
\begin{split}
    & a_{2t}b_1+a_{2t-1}b_3+\cdots+a_0b_{4t+1}+\frac{2t+7}{2t+8}ab_{4t+3}+\frac{11}{12}a_{2t+1} \\
     &\ \ \ \ \ \ \ \ \ \ \ =a_1b_{2t}+a_3b_{2t-1}+\cdots+a_{4t+1}b_0+\frac{11}{12}ab_{2t+1}+\frac{2t+7}{2t+8}a_{4t+3}.\label{5.12}
\end{split}
\end{align}
Subtracting Equation (\ref{5.11})  from Equation (\ref{5.10}) and subtracting Equation (\ref{5.12})  from Equation (\ref{5.11}) yields
\begin{align*}
 \begin{cases}
\frac{2}{(2t+4)(2t+6)}(ab_{4t+3}-a_{4t+3})+\frac{1}{8}(a_{2t+1}-ab_{2t+1})=0,\\
\frac{2}{(2t+6)(2t+8)}(ab_{4t+3}-a_{4t+3})+\frac{1}{24}(a_{2t+1}-ab_{2t+1})=0.
 \end{cases}
\end{align*}
Denoting $ab_{4t+3}-a_{4t+3}$ as $M$ and $a_{2t+1}-ab_{2t+1}$ as $N$, respectively, we obtain that
\begin{align}\label{5.13}
 \begin{cases}
\frac{2}{(2t+4)(2t+6)}M+\frac{1}{8}N=0,\\
\frac{2}{(2t+6)(2t+8)}M+\frac{1}{24}N=0.
 \end{cases}
 \end{align}
The coefficient matrix of (\ref{5.13}) is
\[\left(\begin{array}{cc}
         \frac{2}{(2t+4)(2t+6)} & \frac{1}{8} \\
         \frac{2}{(2t+6)(2t+8)} & \frac{1}{24} 
\end{array} \right)\]
and it is invertible. Thus $M=N=0$, i.e., 
\begin{align}\label{5.14}
\begin{split}
a_{2t+1}=ab_{2t+1},\,a_{4t+3}=ab_{4t+3}
\end{split}
 \end{align}
for all $t\geq 0$. By (\ref{5.9}) and (\ref{5.14}) we obtain that $\varphi=a\psi$. Thus,  $f=ag$.

If $a=0$, $b=1$, we have $f=\varphi$, $g=\overline{z}+\psi$. This situation can be solved as described in \cite{[11]}.

\textbf{Step 2.}
If $b\neq1$ and $b\neq 0$, denoting $\frac{1}{b}\psi$ as $\phi$, let $h=\frac{1}{b}g=\overline{z}+\phi$. Then $f$ and $h$ satisfying the case in step 1.

Combining the first and second steps, we have completed the proof of this theorem.
\end{proof}
\begin{lem}\label{b}
Let $f=a{\overline{z}}^N+\varphi,\,g=b{\overline{z}}^N+\psi$, $\varphi$ and  $\psi$ be two bounded analytic functions on $\mathbb D$, $a,b\in\mathbb{R}$, $N\geq1$, where $N\in \Z$. There is a constant $c$ such that $B_fB_g=B_gB_f$ if and only if $f=cg$.
\end{lem}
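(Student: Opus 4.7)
The plan is to generalize the coefficient calculation of Lemma \ref{a} from $N = 1$ to arbitrary $N \geq 1$. Sufficiency is immediate, and the scaling trick used in Step 2 of the previous proof reduces the necessity to the case $a \neq 0$, $b = 1$. Writing $\varphi = \sum_{n \geq 0} a_n z^n$ and $\psi = \sum_{m \geq 0} b_m z^m$, I would split the identity $B_f B_g z^k = B_g B_f z^k$ into its six constituent pieces
\[
B_\varphi B_\psi,\ aB_{\overline{z}^N} B_\psi,\ B_\varphi B_{\overline{z}^N},\ B_\psi B_\varphi,\ aB_\psi B_{\overline{z}^N},\ B_{\overline{z}^N} B_\varphi,
\]
and evaluate each on $z^k$ using $B_u = W T_u$, Lemma 2.3 to push the inner $W$ past analytic symbols, the formula $T_{\overline{z}^N} z^k = \tfrac{k-N+1}{k+1} z^{k-N}$ (for $k \geq N$), and the outer projection $WWz^{4s} = z^s$ (with all odd exponents killed).

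Next I would treat $k = 2j$ and $k = 2j+1$ separately and read off the coefficient of $z^s$ on each side. This produces two families of linear relations on $\{a_n\}$ and $\{b_n\}$, directly analogous to (\ref{5.1}) and (\ref{5.2}) but with every shift and every rational weight rewritten in terms of $N$. Finally, imitating the endgame of the previous proof, I would fix $t$ and vary $k$ in such a way that the convolution sums $\sum_i a_i b_{\cdot - i}$ appearing in each relation stay unchanged while only the isolated $k$-dependent weights $\tfrac{2k-N+1}{2k+1}$, $\tfrac{2t+1}{2t+2}$ and their relatives move; subtracting pairs of these equations, or when necessary solving a small invertible linear system as in (\ref{5.13}), isolates the differences $a_n - a b_n$ and forces them to vanish, yielding $\varphi = a \psi$ and $f = a g$.

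The main obstacle is a parity split driven by $N$. When $N$ is odd, the terms $WW T_{\varphi(z^2)} T_{\overline{z}^N} z^{2k}$ and $WW T_{\psi(z^2)} T_{\overline{z}^N} z^{2k}$ vanish automatically, exactly as in the $N = 1$ case, since $2n + 2k - N$ is odd and annihilated by the outer $W$; in this case the equations are essentially a verbatim copy of Lemma \ref{a}. When $N$ is even these cross terms survive and contribute coefficients of the shape $a_{2s-k+N/2}$ and $b_{2s-k+N/2}$ with rational weights $\tfrac{2k-N+1}{2k+1}$, so the recurrences are longer and one must make extra independent choices of $(s, k, t)$ — producing a $3 \times 3$ system in place of (\ref{5.13}) — to decouple the odd- and even-indexed coefficients. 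A secondary bookkeeping nuisance is ensuring $k \geq N$ so that none of the $T_{\overline{z}^N}$-shifts falls off the basis, but this affects only finitely many initial indices and does not alter the elimination argument, which remains invertible because the $k$-dependent weights are pairwise distinct rational functions.
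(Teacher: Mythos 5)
Your plan follows essentially the same route as the paper's proof: reduce to $b=1$ by scaling, generalize the coefficient identities of Lemma \ref{a} using $T_{{\overline{z}}^N}z^k=\tfrac{k-N+1}{k+1}z^{k-N}$, observe that the parity of $N$ determines which cross terms survive the outer $WW$ (vanishing for even inputs when $N$ is odd, and for odd inputs when $N$ is even), and then eliminate by varying $k$ with the convolution sums frozen. The only cosmetic differences are that the paper still gets by with $2\times 2$ invertible systems (obtained by differencing three choices of $(s,k)$) rather than a $3\times 3$ system in the even-$N$ case, and that the degenerate case $a=0$ is not absorbed by the scaling reduction but deferred to an external reference, exactly as in the paper.
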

\begin{proof}
Let us show the sufficiency first. Suppose that $f=cg$,  the result is trivial. 
To prove the necessity, we divide the proof into two steps.

\textbf{Step 1.}
We first consider the case where $b=1$ and assume that
$$\varphi(z)=\sum_{n=0}^{\infty}a_n z^n, \ \ \ \ \ \psi(z)=\sum_{m=0}^{\infty}b_m z^m.$$

If $N$ is odd, note that $B_fB_g=B_gB_f$ is equivalent to 
$$B_\varphi B_\psi z^{k}+aB_{{\overline{z}}^N}B_{\psi}z^{k}+ B_\varphi B_{{\overline{z}}^N} z^{k}= B_\psi B_\varphi z^{k}+aB_{\psi}B_{{\overline{z}}^N}z^{k}+B_{{\overline{z}}^N}B_{\varphi}z^{k}$$
for every  nonnegative integer $k$. Thus, we have
\begin{align*}
\begin{cases}
 B_\varphi B_\psi z^{k}=WT_\varphi W T_\psi z^{k}=WWT_{\varphi(z^2)}T_\psi z^{k}=WWT_{\varphi(z^2)\psi}z^{k},\\
 aB_{{\overline{z}}^N}B_\psi z^{k}=aW T_{{\overline{z}}^N}W T_\psi z^{k},\\
 B_\varphi B_{{\overline{z}}^N}z^{k}=WT_\varphi W T_{{\overline{z}}^N}z^{k} =WWT_{\varphi(z^2)}T_{{\overline{z}}^N}z^{k}, \\
 B_\psi B_\varphi z^{k}=W T_\psi WT_\varphi z^{k}=WWT_{\psi(z^2)}T_\varphi z^{k}=WWT_{\psi(z^2)\varphi}z^{k},\\
 aB_\psi B_{{\overline{z}}^N} z^{k}=aWT_\psi W T_{{\overline{z}}^N}z^{k} =aWWT_{\psi(z^2)}T_{{\overline{z}}^N}z^{k}, \\
 B_{{\overline{z}}^N}B_\varphi z^{k}=WT_{{\overline{z}}^N} WT_{\varphi}z^k.
\end{cases}
\end{align*}
Since for every $k\geq0$, $B_\varphi B_\psi z^{k}, B_{{\overline{z}}^N}B_{\psi}z^{k}, B_\varphi B_{{\overline{z}}^N}z^{k}, B_\psi B_\varphi z^{k}, B_{\psi}B_{{\overline{z}}^N}z^{k}$ and $B_{{\overline{z}}^N}B_{\varphi}z^{k}$ are all analytic, letting 
$$B_\varphi B_\psi z^{2k}+aB_{{\overline{z}}^N}B_{\psi}z^{2k}+ B_\varphi B_{{\overline{z}}^N} z^{2k}:=\sum_{s=0}^{\infty}A_s z^s,$$ 
$$B_\psi B_\varphi z^{2k}+aB_{\psi}B_{{\overline{z}}^N}z^{2k}+B_{{\overline{z}}^N}B_{\varphi}z^{2k}:=\sum_{s=0}^{\infty}B_s z^s.$$
Thus, we only need to consider the $4s$-th terms in $T_{\varphi(z^2)\psi}z^{2k},\ T_{\psi(z^2)\varphi}z^{2k},\ T_{\varphi(z^2)}T_{{\overline{z}}^N}z^{2k},\  T_{\psi(z^2)}T_{{\overline{z}}^N}z^{2k}$ and the $2s$-th terms in $T_{{\overline{z}}^N}W T_\varphi z^{2k},\ T_{{\overline{z}}^N}W T_\psi z^{2k}$.  Calculations show that
\begin{align*}
  T_{\varphi(z^2)\psi}z^{2k}=P\left(\varphi(z^2)\psi z^{2k}\right)=\sum_{n=0}^{\infty}\sum_{m=0}^{\infty}a_nb_mz^{2n+m+2k}:=\sum_{s=0}^{\infty}A_{1,s}z^s,
\end{align*}
\begin{align*}
  aT_{{\overline{z}}^N}W T_\psi z^{2k} &=aT_{{\overline{z}}^N}W\left(\sum_{m=0}^{\infty}b_m z^m z^{2k}\right)\\
  &=aT_{{\overline{z}}^N}\left(\sum_{m=0}^{\infty}b_{2m}z^{m+k}\right)\\
  &=aP\left({{\overline{z}}^N}\sum_{m=0}^{\infty}b_{2m}z^{m+k}\right)\\
      &=a\sum_{m=0}^{\infty}\frac{m+k+1-N}{m+k+1}b_{2m}z^{m+k-N}\\
      &:=\sum_{s=0}^{\infty}A_{2,s}z^s,
\end{align*}
\begin{align*}
T_{\varphi(z^2)}T_{{\overline{z}}^N}z^{2k} &=T_{\varphi(z^2)}P({{\overline{z}}^N}z^{2k})\\
&=T_{\varphi(z^2)}\left(\frac{2k+1-N}{2k+1}z^{2k-N}\right)\\
      &=P\left(\sum_{n=0}^{\infty}\frac{2k+1-N}{2k+1}a_{n}z^{2n}z^{2k-N}\right)\\
      &=\sum_{n=0}^{\infty}\frac{2k+1-N}{2k+1}a_{n}z^{2n+2k-N}\\
      &:=\sum_{s=0}^{\infty}A_{3,s}z^s,
\end{align*}
\begin{align*}
T_{\psi(z^2)\varphi}z^{2k}=P\left(\psi(z^2)\varphi z^{2k}\right)=\sum_{n=0}^{\infty}\sum_{m=0}^{\infty}a_nb_mz^{n+2m+2k}:=\sum_{s=0}^{\infty}B_{1,s}z^s,  
\end{align*}
\begin{align*}
aT_{\psi(z^2)}T_{{\overline{z}}^N}z^{2k} &=aT_{\psi(z^2)}P({{\overline{z}}^N}z^{2k})\\
&=aT_{\psi(z^2)}\left(\frac{2k+1-N}{2k+1}z^{2k-N}\right)\\
      &=aP\left(\sum_{m=0}^{\infty}\frac{2k+1-N}{2k+1}b_{m}z^{2m}z^{2k-N}\right)\\
      &=a\sum_{m=0}^{\infty}\frac{2k+1-N}{2k+1}b_{m}z^{2m+2k-N}\\
      &:=\sum_{s=0}^{\infty}B_{2,s}z^s,  
\end{align*}
\begin{align*}
T_{{\overline{z}}^N}W T_\varphi z^{2k} &=T_{{\overline{z}}^N}W\left(\sum_{n=0}^{\infty}a_n z^n z^{2k}\right)\\
&=T_{{\overline{z}}^N}\left(\sum_{n=0}^{\infty}a_{2n}z^{n+k}\right)\\
&=P\left({{\overline{z}}^N}\sum_{n=0}^{\infty}a_{2n}z^{n+k}\right)\\
      &=\sum_{n=0}^{\infty}\frac{n+k+1-N}{n+k+1}a_{2n}z^{n+k-N}\\
      &:=\sum_{s=0}^{\infty}B_{3,s}z^s.  
\end{align*}
Using $B_\varphi B_\psi z^{k}+aB_{{\overline{z}}^N}B_{\psi}z^{k}+ B_\varphi B_{{\overline{z}}^N} z^{k}= B_\psi B_\varphi z^{k}+aB_{\psi}B_{{\overline{z}}^N}z^{k}+B_{{\overline{z}}^N}B_{\varphi}z^{k}$,  we obtain that $A_s=B_s$ for all $s\geq 0$. Note that\\
$$WWT_{\varphi(z^2)}T_{{\overline{z}}^N}z^{2k}=WW\left(\sum_{n=0}^{\infty}\frac{2k+1-N}{2k+1}a_{n}z^{2n+2k-N}\right)=0,$$
$$WWT_{\psi(z^2)}T_{{\overline{z}}^N}z^{2k}=WW\left(\sum_{m=0}^{\infty}\frac{2k+1-N}{2k+1}b_{m}z^{2m+2k-N}\right)=0.$$\\
Thus, 
$$A_s=A_{1,4s}+A_{2,2s}=B_{1,4s}+B_{3,2s}=B_s.$$

Now we are going to calculate the coefficients $A_s$ and $B_s$ defined above. Indeed, we have
\begin{align*}
  A_s&=A_{1,4s}+A_{2,2s}\\
     &=a_{2s-k}b_0+a_{2s-k-1}b_2+\cdots+a_0b_{4s-2k}+\frac{2s+N}{2s+N+1}ab_{4s-2k+2N}\\
\end{align*}
and
\begin{align*}
  B_s&=B_{1,4s}+B_{3,2s}\\
     &=a_{0}b_{2s-k}+a_{2}b_{2s-k-1}+\cdots+a_{4s-2k}b_{0}+\frac{2s+N}{2s+N+1}a_{4s-2k+2N}.
\end{align*}
This implies that
\begin{align}\label{6.1}
\begin{split}
  &a_{2s-k}b_0+a_{2s-k-1}b_2+\cdots+a_0b_{4s-2k}+\frac{2s+N}{2s+N+1}ab_{4s-2k+2N}\\
  &=a_{0}b_{2s-k}+a_{2}b_{2s-k-1}+\cdots+a_{4s-2k}b_{0}+\frac{2s+N}{2s+N+1}a_{4s-2k+2N}.
\end{split}
 \end{align}

Similarly, we define 
$$B_\varphi B_\psi z^{2k+1}+aB_{{\overline{z}}^N}B_{\psi}z^{2k+1}+ B_\varphi B_{{\overline{z}}^N} z^{2k+1}:=\sum_{s=0}^{\infty}C_s z^s,$$
$$ B_\psi B_\varphi z^{2k+1}+aB_{\psi}B_{{\overline{z}}^N}z^{2k+1}+B_{{\overline{z}}^N}B_{\varphi}z^{2k+1}:=\sum_{s=0}^{\infty}D_s z^s.$$
We also need to consider $4s$-th terms in $T_{\varphi(z^2)\psi}z^{2k+1},\ T_{\psi(z^2)\varphi}z^{2k+1},\ T_{\varphi(z^2)}T_{{\overline{z}}^N}z^{2k+1},\  T_{\psi(z^2)}T_{{\overline{z}}^N}z^{2k+1}$ and the $2s$-th terms in $T_{{\overline{z}}^N}W T_\varphi z^{2k+1},\ T_{{\overline{z}}^N}W T_\psi z^{2k+1}$.  Calculations show that
\begin{align*}
T_{\varphi(z^2)\psi}z^{2k+1}=P\left(\varphi(z^2)\psi z^{2k+1}\right)=\sum_{n=0}^{\infty}\sum_{m=0}^{\infty}a_nb_mz^{2n+m+2k+1}:=\sum_{s=0}^{\infty}C_{1,s}z^s, 
\end{align*}
\begin{align*}
aT_{{\overline{z}}^N}W T_\psi z^{2k+1} &=aT_{{\overline{z}}^N}W\left(\sum_{m=0}^{\infty}b_m z^m z^{2k+1}\right)\\
&=aT_{{\overline{z}}^N}\left(\sum_{m=0}^{\infty}b_{2m+1}z^{m+k+1}\right)\\
      &=aP\left({{\overline{z}}^N}\sum_{m=0}^{\infty}b_{2m+1}z^{m+k+1}\right)\\
      &=a\sum_{m=0}^{\infty}\frac{m+k+2-N}{m+k+2}b_{2m+1}z^{m+k+1-N}\\
      &:=\sum_{s=0}^{\infty}C_{2,s}z^s, 
\end{align*}
\begin{align*}
T_{\varphi(z^2)}T_{{\overline{z}}^N}z^{2k+1} &=T_{\varphi(z^2)}P({{\overline{z}}^N}z^{2k+1})\\
&=T_{\varphi(z^2)}\left(\frac{2k+2-N}{2k+2}z^{2k+1-N}\right)\\
      &=P\left(\sum_{n=0}^{\infty}\frac{2k+2-N}{2k+2}a_{n}z^{2n}z^{2k+1-N}\right)\\
      &=\sum_{n=0}^{\infty}\frac{2k+2-N}{2k+2}a_{n}z^{2n+2k+1-N}\\
      &:=\sum_{s=0}^{\infty}C_{3,s}z^s, 
\end{align*}
\begin{align*}
T_{\psi(z^2)\varphi}z^{2k+1}=P\left(\psi(z^2)\varphi z^{2k+1}\right)=\sum_{n=0}^{\infty}\sum_{m=0}^{\infty}a_nb_mz^{n+2m+2k+1}:=\sum_{s=0}^{\infty}D_{1,s}z^s, 
\end{align*}
\begin{align*}
aT_{\psi(z^2)}T_{{\overline{z}}^N}z^{2k+1} &=aT_{\psi(z^2)}P({{\overline{z}}^N}z^{2k+1})\\
&=aT_{\psi(z^2)}\left(\frac{2k+2-N}{2k+2}z^{2k+1-N}\right)\\
      &=aP\left(\sum_{m=0}^{\infty}\frac{2k+2-N}{2k+2}b_{m}z^{2m}z^{2k+1-N}\right)\\
      &=a\sum_{m=0}^{\infty}\frac{2k+2-N}{2k+2}b_{m}z^{2m+2k+1-N}\\
      &:=\sum_{s=0}^{\infty}D_{2,s}z^s, 
\end{align*}
\begin{align*}
T_{{\overline{z}}^N}W T_\varphi z^{2k+1} &=T_{{\overline{z}}^N}W\left(\sum_{n=0}^{\infty}a_n z^n z^{2k+1}\right)\\
&=T_{{\overline{z}}^N}\left(\sum_{n=0}^{\infty}a_{2n+1}z^{n+k+1}\right)\\
      &=P\left({{\overline{z}}^N}\sum_{n=0}^{\infty}a_{2n+1}z^{n+k+1}\right)\\
      &=\sum_{n=0}^{\infty}\frac{n+k+2-N}{n+k+2}a_{2n+1}z^{n+k+1-N}\\
      &:=\sum_{s=0}^{\infty}D_{3,s}z^s. 
\end{align*}
We also obtain that $C_s=D_s$ for all $s\geq 0$. Therefore, 
$$C_s=C_{1,4s}+C_{2,2s}+C_{3,4s}=D_{1,4s}+D_{2,4s}+D_{3,2s}=D_s.$$
We concluded that
\begin{align*}
  C_s&=C_{1,4s}+C_{2,2s}+C_{3,4s}\\
     &=a_{2s-k-1}b_1+a_{2s-k-2}b_3+\cdots+a_0b_{4s-2k-1}+\frac{2s+N}{2s+N+1}ab_{4s-2k+2N-1}\\
     &\ \ \ \ +\frac{2k+2-N}{2k+2}a_{2s-k+\frac{N-1}{2}}
\end{align*}
and
\begin{align*}
  D_s&=D_{1,4s}+D_{2,4s}+D_{3,2s}\\
     &=a_{1}b_{2s-k-1}+a_{3}b_{2s-k-2}+\cdots+a_{4s-2k-1}b_{0}+\frac{2k+2-N}{2k+2}ab_{2s-k+\frac{N-1}{2}}\\
     &\ \ \ \ +\frac{2s+N}{2s+N+1}a_{4s-2k+2N-1}.
\end{align*}
This follows that
\begin{align}\label{6.2}
\begin{split}
&a_{2s-k-1}b_1+a_{2s-k-2}b_3+\cdots+a_0b_{4s-2k-1}+\frac{2s+N}{2s+N+1}ab_{4s-2k+2N-1}\\
&\ \ \ \ \ \ \ \ \ \ \ \ \ \ \ \ +\frac{2k+2-N}{2k+2}a_{2s-k+\frac{N-1}{2}}\\
     &=a_{1}b_{2s-k-1}+a_{3}b_{2s-k-2}+\cdots+a_{4s-2k-1}b_{0}+\frac{2k+2-N}{2k+2}ab_{2s-k+\frac{N-1}{2}}\\
     &\ \ \ \ \ \ \ \ \ \ \ \ \ \ \ \ +\frac{2s+N}{2s+N+1}a_{4s-2k+2N-1}.
\end{split}
 \end{align}

By setting $s=t, k=N;\, s=t+1, k=N+2$ in (\ref{6.1}), we obtain the following equations:
\begin{align}
\begin{split}
    & a_{2t-N}b_0+a_{2t-N-1}b_2+\cdots+a_0b_{4t-2N}+\frac{2t+N}{2t+N+1}ab_{4t} \\
     &\ \ \ \ \ \ \ \ \ \ \ \ =a_0b_{2t-N}+a_2b_{2t-N-1}+\cdots+a_{4t-2N}b_0+\frac{2t+N}{2t+N+1}a_{4t};\label{6.3}
\end{split}
\end{align}
\begin{align}
\begin{split}
    & a_{2t-N}b_0+a_{2t-N-1}b_2+\cdots+a_0b_{4t-2N}+\frac{2t+N+2}{2t+N+3}ab_{4t} \\
     &\ \ \ \ \ \ \ \ \ \ \ \ =a_0b_{2t-N}+a_2b_{2t-N-1}+\cdots+a_{4t-2N}b_0+\frac{2t+N+2}{2t+N+3}a_{4t}.\label{6.4}
\end{split}
\end{align}
Subtracting Equation (\ref{6.4})  from Equation (\ref{6.3}) yields that
$$\frac{2}{(2t+N+1)(2t+N+3)}ab_{4t}=\frac{2}{(2t+N+1)(2t+N+3)}a_{4t}.$$
Since $a\neq 0$ and $\frac{2}{(2t+N+1)(2t+N+3)}\neq0$ for any $t\geq 0$, we obtain that 
\begin{align}\label{6.5}
\begin{split}
a_{4t}=ab_{4t}.
\end{split}
 \end{align}

Similarly, by setting $s=t+1, k=N+1;\, s=t+2, k=N+3$ in (\ref{6.1}), we get the following equations:
\begin{align}
\begin{split}
    & a_{2t-N+1}b_0+a_{2t-N}b_2+\cdots+a_0b_{4t-2N+2}+\frac{2t+N+2}{2t+N+3}ab_{4t+2} \\
     &\ \ \ \ \ \ \ \ \ \ \ \ =a_0b_{2t-N+1}+a_2b_{2t-N}+\cdots+a_{4t-2N+2}b_0+\frac{2t+N+2}{2t+N+3}a_{4t+2};\label{6.6}
\end{split}
\end{align}
\begin{align}
\begin{split}
    & a_{2t-N+1}b_0+a_{2t-N}b_2+\cdots+a_0b_{4t-2N+2}+\frac{2t+N+4}{2t+N+5}ab_{4t+2} \\
     &\ \ \ \ \ \ \ \ \ \ \ \ =a_0b_{2t-N+1}+a_2b_{2t-N}+\cdots+a_{4t-2N+2}b_0+\frac{2t+N+4}{2t+N+5}a_{4t+2}.\label{6.7}
\end{split}
\end{align} 
Subtracting Equation (\ref{6.7})  from Equation (\ref{6.6}) yields that
$$\frac{2}{(2t+N+3)(2t+N+5)}ab_{4t+2}=\frac{2}{(2t+N+3)(2t+N+5)}a_{4t+2}.$$
Since $a\neq 0$ and $\frac{2}{(2t+N+3)(2t+N+5)}\neq0$ for any $t\geq 0$, we obtain that 
\begin{align}\label{6.8}
\begin{split}
a_{4t+2}=ab_{4t+2}.
\end{split}
 \end{align}
By (\ref{6.5}) and (\ref{6.8}), we have
\begin{align}\label{6.9}
\begin{split}
a_{2t}=ab_{2t}\ \ \mbox{for all}\  t\geq 0.
\end{split}
 \end{align}

Next, by setting $s=t,\,k=N-1;s=t+1,\,k=N+1; s=t+2,\,k=N+3$ in (\ref{6.2}), we get the following equations:
\begin{align}
\begin{split}
    & a_{2t-N}b_1+a_{2t-N-1}b_3+\cdots+a_0b_{4t-2N+1}+\frac{2t+N}{2t+N+1}ab_{4t+1}+\frac{N}{2N}a_{2t-N+1+\frac{N-1}{2}} \\
     &\ \ \ \ \ \ \ \ \ \ \ \ =a_1b_{2t-N}+a_3b_{2t-N-1}+\cdots+a_{4t-2N+1}b_0+\frac{N}{2N}ab_{2t-N+1+\frac{N-1}{2}}+\frac{2t+N}{2t+N+1}a_{4t+1};\label{6.10}
\end{split}
\end{align}
\begin{align}
\begin{split}
    & a_{2t-N}b_1+a_{2t-N-1}b_3+\cdots+a_0b_{4t-2N+1}+\frac{2t+N+2}{2t+N+3}ab_{4t+1}+\frac{N+4}{2N+4}a_{2t-N+1+\frac{N-1}{2}} \\
     &\ \ \ \ \ \ \ \ \ \ \ \ =a_1b_{2t-N}+a_3b_{2t-N-1}+\cdots+a_{4t-2N+1}b_0+\frac{N+4}{2N+4}ab_{2t-N+1+\frac{N-1}{2}}+\frac{2t+N+2}{2t+N+3}a_{4t+1};\label{6.11}
\end{split}
\end{align}
\begin{align}
\begin{split}
    & a_{2t-N}b_1+a_{2t-N-1}b_3+\cdots+a_0b_{4t-2N+1}+\frac{2t+N+4}{2t+N+5}ab_{4t+1}+\frac{N+8}{2N+8}a_{2t-N+1+\frac{N-1}{2}} \\
     &\ \ \ \ \ \ \ \ \ \ \ \ =a_1b_{2t-N}+a_3b_{2t-N-1}+\cdots+a_{4t-2N+1}b_0+\frac{N+8}{2N+8}ab_{2t-N+1+\frac{N-1}{2}}+\frac{2t+N+4}{2t+N+5}a_{4t+1}.\label{6.12}
\end{split}
\end{align}
Subtracting Equation (\ref{6.11})  from Equation (\ref{6.10}) and subtracting Equation (\ref{6.12})  from Equation (\ref{6.11}) yields that
 \begin{align*}
 \begin{cases}
\frac{2(ab_{4t+1}-a_{4t+1})}{(2t+N+1)(2t+N+3)}+\frac{4N}{2N(2N+4)}(a_{2t-N+1+\frac{N-1}{2}}-ab_{2t-N+1+\frac{N-1}{2}})=0,\\
\frac{2(ab_{4t+1}-a_{4t+1})}{(2t+N+3)(2t+N+5)}+\frac{4N}{(2N+4)(2N+8)}(a_{2t-N+1+\frac{N-1}{2}}-ab_{2t-N+1+\frac{N-1}{2}})=0.
 \end{cases}
 \end{align*}
Denoting $ab_{4t+3}-a_{4t+1}$ as $E$ and $a_{2t-N+1+\frac{N-1}{2}}-ab_{2t-N+1+\frac{N-1}{2}}$ as $F$, respectively, we obtain that
\begin{align}\label{6.14}
 \begin{cases}
\frac{2}{(2t+N+1)(2t+N+3)}E+\frac{4N}{2N(2N+4)}F=0,\\
\frac{2}{(2t+N+3)(2t+N+5)}E+\frac{4N}{(2N+4)(2N+8)}F=0.
 \end{cases}
 \end{align}
The coefficient matrix of (\ref{6.14}) is
\[\left(\begin{array}{cc}
         \frac{2}{(2t+N+1)(2t+N+3)} & \frac{4N}{2N(2N+4)} \\
         \frac{2}{(2t+N+3)(2t+N+5)} & \frac{4N}{(2N+4)(2N+8)} 
\end{array} \right)\]
and it is invertible. Thus $E=F=0$, we have
\begin{align}\label{6.15}
\begin{split}
a_{4t+1}=ab_{4t+1}\  \mbox{for all}\  t\geq 0.
\end{split}
 \end{align}

Similarly, by setting $s=t+1,\,k=N;s=t+2,\,k=N+2; s=t+3,\,k=N+4$ in (\ref{6.2}), we get the following equations:
\begin{align}
\begin{split}
    & a_{2t-N+1}b_1+a_{2t-N}b_3+\cdots+a_0b_{4t-2N+3}+\frac{2t+N+2}{2t+N+3}ab_{4t+3}+\frac{N+2}{2N+2}a_{2t+2-N+\frac{N-1}{2}} \\
     &\ \ \ \ \ \ \ \ \ \ \ \ =a_1b_{2t-N+1}+a_3b_{2t-N}+\cdots+a_{4t-2N+3}b_0+\frac{N+2}{2N+2}ab_{2t+2-N+\frac{N-1}{2}}+\frac{2t+N+2}{2t+N+3}a_{4t+3};\label{6.16}
\end{split}
\end{align}
\begin{align}
\begin{split}
    & a_{2t-N+1}b_1+a_{2t-N}b_3+\cdots+a_0b_{4t-2N+3}+\frac{2t+N+4}{2t+N+5}ab_{4t+3}+\frac{N+6}{2N+6}a_{2t+2-N+\frac{N-1}{2}} \\
     &\ \ \ \ \ \ \ \ \ \ \ \ =a_1b_{2t-N+1}+a_3b_{2t-N}+\cdots+a_{4t-2N+3}b_0+\frac{N+6}{2N+6}ab_{2t+2-N+\frac{N-1}{2}}+\frac{2t+N+4}{2t+N+5}a_{4t+3};\label{6.17}
\end{split}
\end{align}
\begin{align}
\begin{split}
    & a_{2t-N+1}b_1+a_{2t-N}b_3+\cdots+a_0b_{4t-2N+3}+\frac{2t+N+6}{2t+N+7}ab_{4t+3}+\frac{N+10}{2N+10}a_{2t+2-N+\frac{N-1}{2}} \\
     &\ \ \ \ \ \ \ \ \ \ \ \ =a_1b_{2t-N+1}+a_3b_{2t-N}+\cdots+a_{4t-2N+3}b_0+\frac{N+10}{2N+10}ab_{2t+2-N+\frac{N-1}{2}}+\frac{2t+N+6}{2t+N+7}a_{4t+3}.\label{6.18}
\end{split}
\end{align}
Subtracting Equation (\ref{6.17})  from Equation (\ref{6.16}) and subtracting Equation (\ref{6.18})  from Equation (\ref{6.17}) yields that
 \begin{align*}
 \begin{cases}
\frac{2(ab_{4t+3}-a_{4t+3})}{(2t+N+3)(2t+N+5)}+\frac{4N}{(2N+2)(2N+6)}(a_{2t+2-N+\frac{N-1}{2}}-ab_{2t+2-N+\frac{N-1}{2}})=0,\\
\frac{2(ab_{4t+3}-a_{4t+3})}{(2t+N+5)(2t+N+7)}+\frac{4N}{(2N+6)(2N+10)}(a_{2t+2-N+\frac{N-1}{2}}-ab_{2t+2-N+\frac{N-1}{2}})=0.
 \end{cases}
 \end{align*}
Denoting $ab_{4t+3}-a_{4t+3}$ as $X$ and $a_{2t+2-N+\frac{N-1}{2}}-ab_{2t+2-N+\frac{N-1}{2}}$ as $Y$, respectively, we obtain that
\begin{align}\label{6.19}
 \begin{cases}
\frac{2}{(2t+N+1)(2t+N+3)}X+\frac{4N}{(2N+2)(2N+6)}Y=0,\\
\frac{2}{(2t+N+3)(2t+N+5)}X+\frac{4N}{(2N+6)(2N+10)}Y=0.
 \end{cases}
 \end{align}
The coefficient matrix of (\ref{6.19}) is
\[\left(\begin{array}{cc}
         \frac{2}{(2t+N+1)(2t+N+3)} & \frac{4N}{(2N+2)(2N+6)} \\
         \frac{2}{(2t+N+3)(2t+N+5)} & \frac{4N}{(2N+6)(2N+10)} 
\end{array} \right)\]
and it is invertible. Thus $X=Y=0$, we have
\begin{align}\label{6.20}
\begin{split}
a_{4t+3}=ab_{4t+3}
\end{split}
 \end{align}
for all $t\geq 0$. By (\ref{6.15}) and (\ref{6.20}), we have
\begin{align}\label{6.21}
\begin{split}
a_{2t+1}=ab_{2t+1}
\end{split}
 \end{align}
for all $t\geq 0$. By (\ref{6.9}) and (\ref{6.21}) we obtain that $\varphi=a\psi$. Thus,  $f=ag$.\\

If $N$ is even, note that $B_fB_g=B_gB_f$ is equivalent to 
$$B_\varphi B_\psi z^{k}+aB_{{\overline{z}}^N}B_{\psi}z^{k}+ B_\varphi B_{{\overline{z}}^N} z^{k}= B_\psi B_\varphi z^{k}+aB_{\psi}B_{{\overline{z}}^N}z^{k}+B_{{\overline{z}}^N}B_{\varphi}z^{k}$$
for every  nonnegative integer $k$. Thus, we have
\begin{align*}
\begin{cases}
 B_\varphi B_\psi z^{k}=WT_\varphi W T_\psi z^{k}=WWT_{\varphi(z^2)}T_\psi z^{k}=WWT_{\varphi(z^2)\psi}z^{k},\\
 aB_{{\overline{z}}^N}B_\psi z^{k}=aW T_{{\overline{z}}^N}W T_\psi z^{k}, \\
 B_\varphi B_{{\overline{z}}^N}z^{k}=WT_\varphi W T_{{\overline{z}}^N}z^{k} =WWT_{\varphi(z^2)}T_{{\overline{z}}^N}z^{k}, \\
 B_\psi B_\varphi z^{k}=W T_\psi WT_\varphi z^{k}=WWT_{\psi(z^2)}T_\varphi z^{k}=WWT_{\psi(z^2)\varphi}z^{k},\\
 aB_\psi B_{{\overline{z}}^N} z^{k}=aWT_\psi W T_{{\overline{z}}^N}z^{k} =aWWT_{\psi(z^2)}T_{{\overline{z}}^N}z^{k}, \\
 B_{{\overline{z}}^N}B_\varphi z^{k}=WT_{{\overline{z}}^N} WT_{\varphi}z^k.
\end{cases}
\end{align*}
Since for every $k\geq0$, $B_\varphi B_\psi z^{k}, B_{{\overline{z}}^N}B_{\psi}z^{k}, B_\varphi B_{{\overline{z}}^N}z^{k}, B_\psi B_\varphi z^{k}, B_{\psi}B_{{\overline{z}}^N}z^{k}$ and $B_{{\overline{z}}^N}B_{\varphi}z^{k}$ are all analytic, letting 
$$B_\varphi B_\psi z^{2k}+aB_{{\overline{z}}^N}B_{\psi}z^{2k}+ B_\varphi B_{{\overline{z}}^N} z^{2k}:=\sum_{s=0}^{\infty}A_s z^s,$$ 
$$B_\psi B_\varphi z^{2k}+aB_{\psi}B_{{\overline{z}}^N}z^{2k}+B_{{\overline{z}}^N}B_{\varphi}z^{2k}:=\sum_{s=0}^{\infty}B_s z^s.$$
So we only need to consider the $4s$-th terms in $T_{\varphi(z^2)\psi}z^{2k},\ T_{\psi(z^2)\varphi}z^{2k},\ T_{\varphi(z^2)}T_{{\overline{z}}^N}z^{2k},\  T_{\psi(z^2)}T_{{\overline{z}}^N}z^{2k}$ and the $2s$-th terms in $T_{{\overline{z}}^N}W T_\varphi z^{2k},\ T_{{\overline{z}}^N}W T_\psi z^{2k}$.  Calculations show that
\begin{align*}
T_{\varphi(z^2)\psi}z^{2k}&=P\left(\varphi(z^2)\psi z^{2k}\right)\\
&=\sum_{n=0}^{\infty}\sum_{m=0}^{\infty}a_nb_mz^{2n+m+2k}\\
&:=\sum_{s=0}^{\infty}A_{1,s}z^s,  
\end{align*}
\begin{align*}
aT_{{\overline{z}}^N}W T_\psi z^{2k} &=aT_{{\overline{z}}^N}W\left(\sum_{m=0}^{\infty}b_m z^m z^{2k}\right)\\
&=aT_{{\overline{z}}^N}\left(\sum_{m=0}^{\infty}b_{2m}z^{m+k}\right)\\
&=aP\left({{\overline{z}}^N}\sum_{m=0}^{\infty}b_{2m}z^{m+k}\right)\\
      &=a\sum_{m=0}^{\infty}\frac{m+k+1-N}{m+k+1}b_{2m}z^{m+k-N}\\
      &:=\sum_{s=0}^{\infty}A_{2,s}z^s,  
\end{align*}
\begin{align*}
T_{\varphi(z^2)}T_{{\overline{z}}^N}z^{2k} &=T_{\varphi(z^2)}P({{\overline{z}}^N}z^{2k})\\
&=T_{\varphi(z^2)}\left(\frac{2k+1-N}{2k+1}z^{2k-N}\right)\\
      &=P\left(\sum_{n=0}^{\infty}\frac{2k+1-N}{2k+1}a_{n}z^{2n}z^{2k-N}\right)\\
      &=\sum_{n=0}^{\infty}\frac{2k+1-N}{2k+1}a_{n}z^{2n+2k-N}\\
      &:=\sum_{s=0}^{\infty}A_{3,s}z^s,  
\end{align*}
\begin{align*}
T_{\psi(z^2)\varphi}z^{2k}&=P\left(\psi(z^2)\varphi z^{2k}\right)\\
&=\sum_{n=0}^{\infty}\sum_{m=0}^{\infty}a_nb_mz^{n+2m+2k}\\
&:=\sum_{s=0}^{\infty}B_{1,s}z^s,  
\end{align*}
\begin{align*}
aT_{\psi(z^2)}T_{{\overline{z}}^N}z^{2k} &=aT_{\psi(z^2)}P({{\overline{z}}^N}z^{2k})\\
&=aT_{\psi(z^2)}\left(\frac{2k+1-N}{2k+1}z^{2k-N}\right)\\
      &=aP\left(\sum_{m=0}^{\infty}\frac{2k+1-N}{2k+1}b_{m}z^{2m}z^{2k-N}\right)\\
      &=a\sum_{m=0}^{\infty}\frac{2k+1-N}{2k+1}b_{m}z^{2m+2k-N}\\
      &:=\sum_{s=0}^{\infty}B_{2,s}z^s,  
\end{align*}
\begin{align*}
T_{{\overline{z}}^N}W T_\varphi z^{2k} &=T_{{\overline{z}}^N}W\left(\sum_{n=0}^{\infty}a_n z^n z^{2k}\right)\\
&=T_{{\overline{z}}^N}\left(\sum_{n=0}^{\infty}a_{2n}z^{n+k}\right)\\
&=P\left({{\overline{z}}^N}\sum_{n=0}^{\infty}a_{2n}z^{n+k}\right)\\
      &=\sum_{n=0}^{\infty}\frac{n+k+1-N}{n+k+1}a_{2n}z^{n+k-N}\\
      &:=\sum_{s=0}^{\infty}B_{3,s}z^s.  
\end{align*}
Using $B_\varphi B_\psi z^{k}+aB_{{\overline{z}}^N}B_{\psi}z^{k}+ B_\varphi B_{{\overline{z}}^N} z^{k}= B_\psi B_\varphi z^{k}+aB_{\psi}B_{{\overline{z}}^N}z^{k}+B_{{\overline{z}}^N}B_{\varphi}z^{k}$,  we obtain that $A_s=B_s$ for all $s\geq0$. 
Thus, 
$$A_s=A_{1,4s}+A_{2,2s}+A_{3,4s}=B_{1,4s}+B_{2,4s}+B_{3,2s}=B_s.$$

Now we are going to calculate the coefficients $A_s$ and $B_s$ defined above. Indeed, we have
\begin{align*}
  A_s&=A_{1,4s}+A_{2,2s}+A_{3,4s}\\
     &=a_{2s-k}b_0+a_{2s-k-1}b_2+\cdots+a_0b_{4s-2k}+\frac{2s+N}{2s+N+1}ab_{4s-2k+2N}+\frac{2k+1-N}{2k+1}a_{2s-k+\frac{N}{2}}\\
\end{align*}
and
\begin{align*}
  B_s&=B_{1,4s}+B_{2,4s}+B_{3,2s}\\
     &=a_{0}b_{2s-k}+a_{2}b_{2s-k-1}+\cdots+a_{4s-2k}b_{0}+\frac{2s+N}{2s+N+1}a_{4s-2k+2N}+\frac{2k+1-N}{2k+1}b_{2s-k+\frac{N}{2}}.
\end{align*}
This implies that
\begin{align}\label{6.22}
\begin{split}
  &a_{2s-k}b_0+a_{2s-k-1}b_2+\cdots+a_0b_{4s-2k}+\frac{2s+N}{2s+N+1}ab_{4s-2k+2N}+\frac{2k+1-N}{2k+1}a_{2s-k+\frac{N}{2}}\\
  &=a_{0}b_{2s-k}+a_{2}b_{2s-k-1}+\cdots+a_{4s-2k}b_{0}+\frac{2s+N}{2s+N+1}a_{4s-2k+2N}\\
  &\ \ \ \ \ \ \ \ \ \ \ \ \ \ \ \ +\frac{2k+1-N}{2k+1}b_{2s-k+\frac{N}{2}}.
\end{split}
 \end{align}

Similarly, let 
$$B_\varphi B_\psi z^{2k+1}+aB_{{\overline{z}}^N}B_{\psi}z^{2k+1}+ B_\varphi B_{{\overline{z}}^N} z^{2k+1}:=\sum_{s=0}^{\infty}C_s z^s,$$
$$ B_\psi B_\varphi z^{2k+1}+aB_{\psi}B_{{\overline{z}}^N}z^{2k+1}+B_{{\overline{z}}^N}B_{\varphi}z^{2k+1}:=\sum_{s=0}^{\infty}D_s z^s.$$
We also need to consider $4s$-th terms in $T_{\varphi(z^2)\psi}z^{2k+1},\ T_{\psi(z^2)\varphi}z^{2k+1},\ T_{\varphi(z^2)}T_{{\overline{z}}^N}z^{2k+1},\  T_{\psi(z^2)}T_{{\overline{z}}^N}z^{2k+1}$ and the $2s$-th terms in $T_{{\overline{z}}^N}W T_\varphi z^{2k+1},\ T_{{\overline{z}}^N}W T_\psi z^{2k+1}$.  Calculations show that
\begin{align*}
T_{\varphi(z^2)\psi}z^{2k+1}&=P\left(\varphi(z^2)\psi z^{2k+1}\right)\\
&=\sum_{n=0}^{\infty}\sum_{m=0}^{\infty}a_nb_mz^{2n+m+2k+1}\\
&:=\sum_{s=0}^{\infty}C_{1,s}z^s,  
\end{align*}
\begin{align*}
aT_{{\overline{z}}^N}W T_\psi z^{2k+1} &=aT_{{\overline{z}}^N}W\left(\sum_{m=0}^{\infty}b_m z^m z^{2k+1}\right)\\
&=aT_{{\overline{z}}^N}\left(\sum_{m=0}^{\infty}b_{2m+1}z^{m+k+1}\right)\\
      &=aP\left({{\overline{z}}^N}\sum_{m=0}^{\infty}b_{2m+1}z^{m+k+1}\right)\\
      &=a\sum_{m=0}^{\infty}\frac{m+k+2-N}{m+k+2}b_{2m+1}z^{m+k+1-N}\\
      &:=\sum_{s=0}^{\infty}C_{2,s}z^s,  
\end{align*}
\begin{align*}
T_{\varphi(z^2)}T_{{\overline{z}}^N}z^{2k+1} &=T_{\varphi(z^2)}P({{\overline{z}}^N}z^{2k+1})\\
&=T_{\varphi(z^2)}\left(\frac{2k+2-N}{2k+2}z^{2k+1-N}\right)\\
      &=P\left(\sum_{n=0}^{\infty}\frac{2k+2-N}{2k+2}a_{n}z^{2n}z^{2k+1-N}\right)\\
      &=\sum_{n=0}^{\infty}\frac{2k+2-N}{2k+2}a_{n}z^{2n+2k+1-N}\\
      &:=\sum_{s=0}^{\infty}C_{3,s}z^s,  
\end{align*}
\begin{align*}
T_{\psi(z^2)\varphi}z^{2k+1}&=P\left(\psi(z^2)\varphi z^{2k+1}\right)\\
&=\sum_{n=0}^{\infty}\sum_{m=0}^{\infty}a_nb_mz^{n+2m+2k+1}\\
&:=\sum_{s=0}^{\infty}D_{1,s}z^s,  
\end{align*}
\begin{align*}
aT_{\psi(z^2)}T_{{\overline{z}}^N}z^{2k+1} &=aT_{\psi(z^2)}P({{\overline{z}}^N}z^{2k+1})\\
&=aT_{\psi(z^2)}\left(\frac{2k+2-N}{2k+2}z^{2k+1-N}\right)\\
      &=aP\left(\sum_{m=0}^{\infty}\frac{2k+2-N}{2k+2}b_{m}z^{2m}z^{2k+1-N}\right)\\
      &=a\sum_{m=0}^{\infty}\frac{2k+2-N}{2k+2}b_{m}z^{2m+2k+1-N}\\
      &:=\sum_{s=0}^{\infty}D_{2,s}z^s,  
\end{align*}
\begin{align*}
T_{{\overline{z}}^N}W T_\varphi z^{2k+1} &=T_{{\overline{z}}^N}W\left(\sum_{n=0}^{\infty}a_n z^n z^{2k+1}\right)\\
&=T_{{\overline{z}}^N}\left(\sum_{n=0}^{\infty}a_{2n+1}z^{n+k+1}\right)\\
      &=P\left({{\overline{z}}^N}\sum_{n=0}^{\infty}a_{2n+1}z^{n+k+1}\right)\\
      &=\sum_{n=0}^{\infty}\frac{n+k+2-N}{n+k+2}a_{2n+1}z^{n+k+1-N}\\
      &:=\sum_{s=0}^{\infty}D_{3,s}z^s.  
\end{align*}
We also obtain that $C_s=D_s$ for all $s\geq 0$. Note that
$$WWT_{\varphi(z^2)}T_{{\overline{z}}^N}z^{2k+1}=WW\left(\sum_{n=0}^{\infty}\frac{2k+2-N}{2k+2}a_{n}z^{2n+2k+1-N}\right)=0,$$
$$WWT_{\psi(z^2)}T_{{\overline{z}}^N}z^{2k+1}=WW\left(\sum_{m=0}^{\infty}\frac{2k+2-N}{2k+2}b_{m}z^{2m+2k+1-N}\right)=0.$$\\
Thus, 
$$C_s=C_{1,4s}+C_{2,2s}=D_{1,4s}+D_{3,2s}=D_s.$$
We concluded that
\begin{align*}
  C_s&=C_{1,4s}+C_{2,2s}\\
     &=a_{2s-k-1}b_1+a_{2s-k-2}b_3+\cdots+a_0b_{4s-2k-1}+\frac{2s+N}{2s+N+1}ab_{4s-2k+2N-1} \\
\end{align*}
and
\begin{align*}
  D_s&=D_{1,4s}+D_{3,2s}\\
     &=a_{1}b_{2s-k-1}+a_{3}b_{2s-k-2}+\cdots+a_{4s-2k-1}b_{0}+\frac{2s+N}{2s+N+1}a_{4s-2k+2N-1}.
\end{align*}
This follows that
\begin{align}\label{6.23}
\begin{split}
&a_{2s-k-1}b_1+a_{2s-k-2}b_3+\cdots+a_0b_{4s-2k-1}+\frac{2s+N}{2s+N+1}ab_{4s-2k+2N-1}\\
     &=a_{1}b_{2s-k-1}+a_{3}b_{2s-k-2}+\cdots+a_{4s-2k-1}b_{0}+\frac{2s+N}{2s+N+1}a_{4s-2k+2N-1}.
\end{split}
 \end{align}

By setting $s=t+1, k=N+1;\, s=t+2, k=N+3$ in (\ref{6.22}), we get the following equations:
 \begin{align}
\begin{split}
  &a_{2t-N}b_1+a_{2t-N-1}b_3+\cdots+a_0b_{4t-2N+1}+\frac{2t+N+2}{2t+N+3}ab_{4t+1}\\
 &\ \ \  =a_1b_{2t-N}+a_3b_{2t-N-1}+\cdots+a_{4t-2N+1}b_0+\frac{2t+N+2}{2t+N+3}a_{4t+1};\label{6.24}
  \end{split}
   \end{align}  
\begin{align}
\begin{split}
  &a_{2t-N}b_1+a_{2t-N-1}b_3+\cdots+a_0b_{4t-2N+1}+\frac{2t+N+4}{2t+N+5}ab_{4t+1}\\
  &\ \ \ =a_1b_{2t-N}+a_3b_{2t-N-1}+\cdots+a_{4t-2N+1}b_0+\frac{2t+N+4}{2t+N+5}a_{4t+1}.\label{6.25}
 \end{split}
   \end{align}  
Subtracting Equation (\ref{6.25})  from Equation (\ref{6.24}) yields that
$$\frac{2}{(2t+N+3)(2t+N+5)}ab_{4t+1}=\frac{2}{(2t+N+3)(2t+N+5)}a_{4t+1}.$$
Since $a\neq 0$ and $\frac{2}{(2t+N+3)(2t+N+5)}\neq0$ for any $t\geq 0$, we obtain that 
\begin{align}\label{6.26}
\begin{split}
a_{4t+1}=ab_{4t+1}.
\end{split}
 \end{align}

Similarly, letting $s=t+1, k=N;\, s=t+2, k=N+2,$  respectively, in (\ref{6.23}), we get the following equations:
\begin{align}
\begin{split}
    & a_{2t-N+1}b_1+a_{2t-N}b_3+\cdots+a_0b_{4t-2N+3}+\frac{2t+N+2}{2t+N+3}ab_{4t+3} \\
     &\ \ \ \ \ \ \ \ \ \ \ \ =a_1b_{2t-N+1}+a_3b_{2t-N}+\cdots+a_{4t-2N+3}b_0+\frac{2t+N+2}{2t+N+3}a_{4t+3};\label{6.27}
\end{split}
\end{align}
\begin{align}
\begin{split}
    & a_{2t-N+1}b_1+a_{2t-N}b_3+\cdots+a_0b_{4t-2N+3}+\frac{2t+N+4}{2t+N+5}ab_{4t+3} \\
     &\ \ \ \ \ \ \ \ \ \ \ \ =a_1b_{2t-N+1}+a_3b_{2t-N}+\cdots+a_{4t-2N+3}b_0+\frac{2t+N+4}{2t+N+5}a_{4t+3}.\label{6.28}
\end{split}
\end{align}
Subtracting Equation (\ref{6.28})  from Equation (\ref{6.27}) yields that
$$\frac{2}{(2t+N+3)(2t+N+5)}ab_{4t+3}=\frac{2}{(2t+N+3)(2t+N+5)}a_{4t+3}.$$
Since $a\neq 0$ and $\frac{2}{(2t+N+3)(2t+N+5)}\neq0$ for any $t\geq 0$, we obtain that 
\begin{align}\label{6.29}
\begin{split}
a_{4t+3}=ab_{4t+3}.
\end{split}
 \end{align}
By (\ref{6.26}) and (\ref{6.29}), we have
\begin{align}\label{6.30}
\begin{split}
a_{2t+1}=ab_{2t+1}\  \mbox{for all}\  t\geq 0.
\end{split}
 \end{align}

Next, taking $s=t,\,k=N;s=t+1,\,k=N+2; s=t+2,\,k=N+4 $,  respectively, in (\ref{6.22}), we get the following equations:
\begin{align}
\begin{split}
    & a_{2t-N}b_0+a_{2t-N-1}b_2+\cdots+a_0b_{4t-2N}+\frac{2t+N}{2t+N+1}ab_{4t}+\frac{N+1}{2N+1}a_{2t-N+1+\frac{N}{2}} \\
     &\ \ \ \ \ \ \ \ \ \ \ \ =a_0b_{2t-N}+a_2b_{2t-N-1}+\cdots+a_{4t-2N}b_0+\frac{N+1}{2N+1}ab_{2t-N+1+\frac{N}{2}}+\frac{2t+N}{2t+N+1}a_{4t};\label{6.31}
\end{split}
\end{align}
\begin{align}
\begin{split}
    & a_{2t-N}b_0+a_{2t-N-1}b_2+\cdots+a_0b_{4t-2N}+\frac{2t+N+2}{2t+N+3}ab_{4t}+\frac{N+5}{2N+5}a_{2t-N+1+\frac{N}{2}} \\
     &\ \ \ \ \ \ \ \ \ \ \ \ =a_0b_{2t-N}+a_2b_{2t-N-1}+\cdots+a_{4t-2N}b_0+\frac{N+5}{2N+5}ab_{2t-N+1+\frac{N}{2}}+\frac{2t+N+2}{2t+N+3}a_{4t};\label{6.32}
\end{split}
\end{align}
\begin{align}
\begin{split}
    & a_{2t-N}b_0+a_{2t-N-1}b_2+\cdots+a_0b_{4t-2N}+\frac{2t+N+4}{2t+N+5}ab_{4t}+\frac{N+9}{2N+9}a_{2t-N+1+\frac{N}{2}} \\
     &\ \ \ \ \ \ \ \ \ \ \ \ =a_0b_{2t-N}+a_2b_{2t-N-1}+\cdots+a_{4t-2N}b_0+\frac{N+9}{2N+9}ab_{2t-N+1+\frac{N}{2}}+\frac{2t+N+4}{2t+N+5}a_{4t}.\label{6.33}
\end{split}
\end{align}
Subtracting Equation (\ref{6.32})  from Equation (\ref{6.31}) and subtracting Equation (\ref{6.33})  from Equation (\ref{6.32}) yields that
 \begin{align*}
 \begin{cases}
\frac{2(ab_{4t}-a_{4t})}{(2t+N+1)(2t+N+3)}+\frac{4N}{(2N+1)(2N+5)}(a_{2t-N+1+\frac{N}{2}}-ab_{2t-N+1+\frac{N}{2}})=0,\\
\frac{2(ab_{4t}-a_{4t})}{(2t+N+3)(2t+N+5)}+c(a_{2t-N+1+\frac{N}{2}}-ab_{2t-N+1+\frac{N}{2}})=0.
 \end{cases}
 \end{align*}
Denoting $ab_{4t}-a_{4t}$ as $E$ and $a_{2t-N+1+\frac{N}{2}}-ab_{2t-N+1+\frac{N}{2}}$ as $F$, respectively, we obtain that
\begin{align}\label{6.35}
 \begin{cases}
\frac{2}{(2t+N+1)(2t+N+3)}E+\frac{4N}{(2N+1)(2N+5)}F=0,\\
\frac{2}{(2t+N+3)(2t+N+5)}E+\frac{4N}{(2N+5)(2N+9)}F=0.
 \end{cases}
 \end{align}
The coefficient matrix of (\ref{6.35}) is
\[\left(\begin{array}{cc}
         \frac{2}{(2t+N+1)(2t+N+3)} & \frac{4N}{(2N+1)(2N+5)} \\
         \frac{2}{(2t+N+3)(2t+N+5)} & \frac{4N}{(2N+5)(2N+9)} 
\end{array} \right)\]
and it is invertible. Thus $E=F=0$, we have
\begin{align}\label{6.36}
\begin{split}
a_{4t}=ab_{4t}\  \mbox{for all}\  t\geq 0.
\end{split}
 \end{align}

Similarly, by setting $s=t+1,\,k=N+1;s=t+2,\,k=N+3; s=t+3,\,k=N+5$ in (\ref{6.22}), we get the following equations:
\begin{align}
\begin{split}
    & a_{2t-N+1}b_0+a_{2t-N}b_2+\cdots+a_0b_{4t-2N+2}+\frac{2t+N+2}{2t+N+3}ab_{4t+2}+\frac{N+3}{2N+3}a_{2t-N+1+\frac{N}{2}} \\
     &\ \ \ \ \ \ \ \ \ \ \ \ =a_0b_{2t-N+1}+a_2b_{2t-N}+\cdots+a_{4t-2N+2}b_0+\frac{N+3}{2N+3}ab_{2t-N+1+\frac{N}{2}}+\frac{2t+N+2}{2t+N+3}a_{4t+2};\label{6.37}
\end{split}
\end{align}
\begin{align}
\begin{split}
    & a_{2t-N+1}b_0+a_{2t-N}b_2+\cdots+a_0b_{4t-2N+2}+\frac{2t+N+4}{2t+N+5}ab_{4t+2}+\frac{N+7}{2N+7}a_{2t-N+1+\frac{N}{2}} \\
     &\ \ \ \ \ \ \ \ \ \ \ \ =a_0b_{2t-N+1}+a_2b_{2t-N}+\cdots+a_{4t-2N+2}b_0+\frac{N+7}{2N+7}ab_{2t-N+1+\frac{N}{2}}+\frac{2t+N+4}{2t+N+5}a_{4t+2};\label{6.38}
\end{split}
\end{align}
\begin{align}
\begin{split}
    & a_{2t-N+1}b_0+a_{2t-N}b_2+\cdots+a_0b_{4t-2N+2}+\frac{2t+N+6}{2t+N+7}ab_{4t+2}+\frac{N+11}{2N+11}a_{2t-N+1+\frac{N}{2}} \\
     &\ \ \ \ \ \ \ \ \ \ \ \ =a_0b_{2t-N+1}+a_2b_{2t-N}+\cdots+a_{4t-2N+2}b_0+\frac{N+11}{2N+11}ab_{2t-N+1+\frac{N}{2}}+\frac{2t+N+6}{2t+N+7}a_{4t+2}.\label{6.39}
\end{split}
\end{align}
Subtracting Equation (\ref{6.38})  from Equation (\ref{6.37}) and subtracting Equation (\ref{6.39})  from Equation (\ref{6.38}) yields that
 \begin{align*}
 \begin{cases}
\frac{2(ab_{4t+2}-a_{4t+2})}{(2t+N+3)(2t+N+5)}+\frac{4N}{(2N+3)(2N+7)}(a_{2t-N+1+\frac{N}{2}}-ab_{2t-N+1+\frac{N}{2}})=0,\\
\frac{2(ab_{4t+2}-a_{4t+2})}{(2t+N+5)(2t+N+7)}+\frac{4N}{(2N+7)(2N+11)}(a_{2t-N+1+\frac{N}{2}}-ab_{2t-N+1+\frac{N}{2}})=0.
 \end{cases}
 \end{align*}
Denoting $ab_{4t+2}-a_{4t+2}$ as $X$ and $a_{2t-N+1+\frac{N}{2}}-ab_{2t-N+1+\frac{N}{2}}$ as $Y$, respectively, we obtain that
\begin{align}\label{6.41}
 \begin{cases}
\frac{2}{(2t+N+3)(2t+N+5)}X+\frac{4N}{(2N+3)(2N+7)}Y=0,\\
\frac{2}{(2t+N+5)(2t+N+7)}X+\frac{4N}{(2N+7)(2N+11)}Y=0.
 \end{cases}
 \end{align}
The coefficient matrix of (\ref{6.41}) is
\[\left(\begin{array}{cc}
         \frac{2}{(2t+N+3)(2t+N+5)} & \frac{4N}{(2N+3)(2N+7)} \\
         \frac{2}{(2t+N+5)(2t+N+7)} & \frac{4N}{(2N+7)(2N+11)} 
\end{array} \right)\]
and it is invertible. Thus $X=Y=0$, we have
\begin{align}\label{6.42}
\begin{split}
a_{4t+2}=ab_{4t+2}
\end{split}
 \end{align}
for all $t\geq 0$. By (\ref{6.36}) and (\ref{6.42}), we have
\begin{align}\label{6.43}
\begin{split}
a_{2t}=ab_{2t}
\end{split}
 \end{align}
for all $t\geq 0$. By (\ref{6.30}) and (\ref{6.43}), we obtain that $\varphi=a\psi$. Thus,  $f=ag$.

If $a=0$, $b=1$, we have $f=\varphi$, $g={\overline{z}}^N+\psi$. This situation can be solved as described in \cite{[11]}.

\textbf{Step 2.}
If $b\neq1$ and $b\neq 0$, denoting $\frac{1}{b}\psi$ as $\phi$, let $h=\frac{1}{b}g={\overline{z}}^N+\phi$. Then $f$ and $h$ satisfying the case in step 1.
\end{proof}
\begin{rem}
  Let $\varphi$ and  $\psi$ be two bounded analytic functions on $\mathbb D$. Then  
  $$B_{\varphi+{\overline{z}}^2}B_{\psi+\overline{z}}\neq B_{\psi+\overline{z}}B_{\varphi+{\overline{z}}^2}$$
and  
    $$B_{\varphi+{\overline{z}}^3}B_{\psi+\overline{z}}\neq B_{\psi+\overline{z}}B_{\varphi+{\overline{z}}^3}.$$
\end{rem}
\begin{proof}
If $B_{\varphi+{\overline{z}}^2}B_{\psi+\overline{z}}\neq B_{\psi+\overline{z}}B_{\varphi+{\overline{z}}^2}$, take $\varphi=\psi=z$. We have
\begin{align}\label{7.1}
\begin{split}
B_{z+{{\overline{z}}^2}}B_{z+\overline{z}}=B_{z+\overline{z}}B_{z+{{\overline{z}}^2}}.
\end{split}
 \end{align}
(\ref{7.1}) is equivalent to
\begin{align*}
\begin{split}
B_{z}B_{\overline{z}}z^k+B_{{\overline{z}}^2}B_{z}z^k+B_{{\overline{z}}^2}B_{\overline{z}}z^k=B_{z}B_{{\overline{z}}^2}z^k+B_{\overline{z}}B_{z}z^k+B_{\overline{z}}B_{{\overline{z}}^2}z^k
\end{split}
 \end{align*}
for every  nonnegative integer $k$. Since
\begin{align*}
\begin{cases}
B_{z}B_{\overline{z}}z^k=WT_{z}WT_{\overline{z}}z^k =WWT_{z^2}T_{\overline{z}}z^k,\\
B_{{\overline{z}}^2}B_{z}z^k=WT_{{\overline{z}}^2}WT_{z}z^k,\\
B_{{\overline{z}}^2}B_{\overline{z}}z^k=WT_{{\overline{z}}^2}WT_{\overline{z}}z^k,\\
B_{z}B_{{\overline{z}}^2}z^k=WT_{z}WT_{{\overline{z}}^2}=WWT_{z^2}T_{{\overline{z}}^2}z^k,\\
B_{\overline{z}}B_{z}z^k=WT_{\overline{z}}WT_{z}z^k,\\
B_{\overline{z}}B_{{\overline{z}}^2}z^k=WT_{\overline{z}}WT_{{\overline{z}}^2}z^k.
\end{cases}
\end{align*}
Thus we only need to consider the $4s-th$ terms in $T_{z^2}T_{\overline{z}}z^k,\ T_{z^2}T_{{\overline{z}}^2}z^k$ and the $2s-th$ terms in $T_{{\overline{z}}^2}WT_{z}z^k$,\\$T_{{\overline{z}}^2}WT_{\overline{z}}z^k,$
$T_{\overline{z}}WT_{z}z^k,\,T_{\overline{z}}WT_{{\overline{z}}^2}z^k$. Calculations show that
\begin{itemize}
  \item[(1)] $\begin{aligned}T_{z^2}T_{\overline{z}}z^{2k} &=T_{z^2}\left(\frac{2k}{2k+1}z^{2k-1}\right)=\frac{2k}{2k+1}z^{2k+1};\end{aligned}$\\
  \item[(2)] $\begin{aligned}T_{{\overline{z}}^2}WT_{z}z^{2k} &=T_{{\overline{z}}^2}W(z^{2k+1})=0;\end{aligned}$\\
  \item[(3)]$\begin{aligned}T_{{\overline{z}}^2}WT_{\overline{z}}z^{2k} &= T_{{\overline{z}}^2}W\left(\frac{2k}{2k+1}z^{2k-1}\right)=0;\end{aligned}$\\
  \item[(4)] $\begin{aligned}T_{z^2}T_{{\overline{z}}^2}z^{2k} &=T_{z^2}\left(\frac{2k-1}{2k+1}z^{2k-2}\right)=\frac{2k-1}{2k+1}z^{2k};\end{aligned}$\\
  \item[(5)]$\begin{aligned}T_{\overline{z}}WT_{z}z^{2k} &=T_{\overline{z}}W(z^{2k+1})=0;\end{aligned}$\\
  \item[(6)] $\begin{aligned} T_{\overline{z}}WT_{{\overline{z}}^2}z^{2k} &=T_{\overline{z}}W\left(\frac{2k-1}{2k+1}z^{2k-2}\right)=T_{\overline{z}}\left(\frac{2k-1}{2k+1}z^{k-1}\right)=\frac{(k-1)(2k-1)}{k(2k+1)}z^{k-2}.\end{aligned}$\\
\end{itemize}
Note that $WWT_{z^2}T_{\overline{z}}z^{2k}=0$. Consider the constant terms of the following equation:
\begin{align}\label{7.2}
\begin{split}
B_{z}B_{\overline{z}}z^{2k}+B_{{\overline{z}}^2}B_{z}z^{2k}+B_{{\overline{z}}^2}B_{\overline{z}}z^{2k}=B_{z}B_{{\overline{z}}^2}z^{2k}+B_{\overline{z}}B_{z}z^{2k}+B_{\overline{z}}B_{{\overline{z}}^2}z^{2k}.
\end{split}
 \end{align}
We find that the left constant term of (\ref{7.2}) is equal to 0, but the right constant term of (\ref{7.2}) is equal to $\frac{3}{10}$, this is a contradiction! Thus, $B_{\varphi+{\overline{z}}^2}B_{\psi+\overline{z}}$ is not commutative.

Next, assuming $B_{\varphi+{\overline{z}}^3}B_{\psi+\overline{z}}\neq B_{\psi+\overline{z}}B_{\varphi+{\overline{z}}^3}$, and taking $\varphi=\psi=z$, we have
\begin{align}\label{7.3}
\begin{split}
B_{z+{{\overline{z}}^3}}B_{z+\overline{z}}=B_{z+\overline{z}}B_{z+{{\overline{z}}^3}}.
\end{split}
 \end{align}
(\ref{7.3}) is equivalent to
\begin{align*}
\begin{split}
B_{z}B_{\overline{z}}z^k+B_{{\overline{z}}^3}B_{z}z^k+B_{{\overline{z}}^3}B_{\overline{z}}z^k=B_{z}B_{{\overline{z}}^3}z^k+B_{\overline{z}}B_{z}z^k+B_{\overline{z}}B_{{\overline{z}}^3}z^k
\end{split}
 \end{align*}
for every  nonnegative integer $k$. Since
\begin{align*}
\begin{cases}
B_{z}B_{\overline{z}}z^k=WT_{z}WT_{\overline{z}}z^k =WWT_{z^2}T_{\overline{z}}z^k,\\
B_{{\overline{z}}^3}B_{z}z^k=WT_{{\overline{z}}^3}WT_{z}z^k,\\
B_{{\overline{z}}^3}B_{\overline{z}}z^k=WT_{{\overline{z}}^3}WT_{\overline{z}}z^k,\\
B_{z}B_{{\overline{z}}^3}z^k=WT_{z}WT_{{\overline{z}}^3}=WWT_{z^2}T_{{\overline{z}}^3}z^k,\\
B_{\overline{z}}B_{z}z^k=WT_{\overline{z}}WT_{z}z^k,\\
B_{\overline{z}}B_{{\overline{z}}^3}z^k=WT_{\overline{z}}WT_{{\overline{z}}^3}z^k.
\end{cases}
\end{align*}
Thus we only need to consider the $4s-th$ terms in $T_{z^2}T_{\overline{z}}z^k,\ T_{z^2}T_{{\overline{z}}^2}z^k$ and the $2s-th$ terms in $T_{{\overline{z}}^3}WT_{z}z^k$\\$T_{{\overline{z}}^3}WT_{\overline{z}}z^k,$
$T_{\overline{z}}WT_{z}z^k,\,T_{\overline{z}}WT_{{\overline{z}}^3}z^k$. Calculations show that 
\begin{itemize}
  \item[(1')] $\begin{aligned}T_{z^2}T_{\overline{z}}z^{2k+1} &=T_{z^2}\left(\frac{2k+1}{2k+2}z^{2k}\right)=\frac{2k+1}{2k+2}z^{2k+2};\end{aligned}$\\
  \item[(2')] $\begin{aligned}T_{{\overline{z}}^3}WT_{z}z^{2k+1} &=T_{{\overline{z}}^3}W(z^{2k+2})=T_{{\overline{z}}^3}(z^{k+1})=\frac{k-1}{k+2}z^{k-2};\end{aligned}$\\
  \item[(3')]$\begin{aligned}T_{{\overline{z}}^3}WT_{\overline{z}}z^{2k+1} &= T_{{\overline{z}}^3}W\left(\frac{2k+1}{2k+2}z^{2k}\right)=T_{{\overline{z}}^3}\left(\frac{2k+1}{2k+2}z^k\right)=\frac{(k-2)(2k+1)}{(k+1)(2k+2)}z^{k-3};\end{aligned}$\\
  \item[(4')] $\begin{aligned}T_{z^2}T_{{\overline{z}}^3}z^{2k+1} &=T_{z^2}\left(\frac{2k-1}{2k+2}z^{2k-2}\right)=\frac{2k-1}{2k+2}z^{2k};\end{aligned}$\\
  \item[(5')]$\begin{aligned}T_{\overline{z}}WT_{z}z^{2k+1} &=T_{\overline{z}}(z^{k+1})=\frac{k+1}{k+2}z^k;\end{aligned}$\\
  \item[(6')] $\begin{aligned} T_{\overline{z}}WT_{{\overline{z}}^3}z^{2k+1} &=T_{\overline{z}}W\left(\frac{2k-1}{2k+2}z^{2k-2}\right)=T_{\overline{z}}\left(\frac{2k-1}{2k+2}z^{k-1}\right)=\frac{(k-1)(2k-1)}{k(2k+2)} z^{k-2}.\end{aligned}$\\
\end{itemize}
Consider the constant terms of the following equation:
\begin{align}\label{7.4}
\begin{split}
B_{z}B_{\overline{z}}z^{2k+1}+B_{{\overline{z}}^3}B_{z}z^{2k+1}+B_{{\overline{z}}^3}B_{\overline{z}}z^{2k+1}=B_{z}B_{{\overline{z}}^3}z^{2k+1}+B_{\overline{z}}B_{z}z^{2k+1}+B_{\overline{z}}B_{{\overline{z}}^3}z^{2k+1}.
\end{split}
 \end{align}
We find that the left constant term of (\ref{7.4}) is equal to 5, but the right constant term of (\ref{7.4}) is equal to 9, this is a contradiction! Thus, $B_{\varphi+{\overline{z}}^3}B_{\psi+\overline{z}}$ is not commutative.
\end{proof}
Based on Lemma \ref{a} and Lemma \ref{b}, we proceed to replace the symbols of two slant Toeplitz operators with $\overline{p}+\varphi$ and $\overline{p}+\psi$, respectively, which leads to the following theorem. 
\begin{thm}\label{c}
Let $f=\overline{p}+\varphi,\,g=\overline{p}+\psi$, $\varphi$ and  $\psi$ be two bounded analytic functions on $\mathbb D$, and p be an analytic polynomian. Then $B_fB_g=B_gB_f$ if and only if $\varphi=\psi$.
\end{thm}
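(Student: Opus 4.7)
The sufficiency is immediate: if $\varphi=\psi$ then $f=g$, and any operator commutes with itself.

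For the necessity, the starting observation is a reduction via a commutator identity. Writing $B_g=B_f+B_h$ with $h:=\psi-\varphi\in H^\infty(\mathbb D)$, we have
$$B_fB_g-B_gB_f=B_fB_h-B_hB_f=[B_f,B_h],$$
so the hypothesis $B_fB_g=B_gB_f$ is equivalent to $[B_f,B_h]=0$, and it suffices to show this forces $h=0$.

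To establish this, I would mimic the computational scheme of Lemma~\ref{b} almost verbatim, but with the full coanalytic polynomial $\overline p$ replacing the single monomial $\overline z^N$. Writing $p(z)=\sum_{j=0}^{N}c_jz^j$ and $h(z)=\sum_{n\ge 0}d_nz^n$, the plan is: for each $k\ge 0$ and each parity $k=2k_0$ or $k=2k_0+1$, expand $B_fB_hz^k$ and $B_hB_fz^k$ using the identity $T_\varphi W=WT_{\varphi(z^2)}$ for the analytic factors $\varphi,h$, together with the explicit formula $T_{\overline z^j}z^m=\frac{m+1-j}{m+1}z^{m-j}$ (valid for $m\ge j$). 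Matching the resulting power-series coefficients of $z^s$ on the two sides yields the direct analogues of equations (\ref{6.1})--(\ref{6.2}) of Lemma~\ref{b}, now with each equation carrying extra summations over $j\in\{0,1,\ldots,N\}$ indexing the monomials of $\overline p$.

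The main obstacle is managing the combinatorial bookkeeping produced by these extra summations. The crux of Lemma~\ref{b} --- picking two equations at neighbouring values of the running parameter, subtracting them to form a $2\times 2$ linear system whose coefficient matrix is invertible, and thereby isolating individual Taylor coefficients --- should go through here as well, provided $c_N\ne 0$. The leading term $\overline{c_N}\overline z^N$ of $\overline p$ plays the role of the single $\overline z^N$ in Lemma~\ref{b}, while the contributions from the lower-degree monomials $\overline{c_j}\overline z^j$ with $j<N$ appear identically on both sides of each coefficient equation and cancel when differences are formed. Because the two symbols $f$ and $g$ share the same $\overline p$, the scaling constant $a$ appearing in the Lemma~\ref{b} conclusion $a_n=a b_n$ is effectively $1$ here, so one obtains $d_n=0$ for every index $n$, hence $h=0$ as desired. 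The base case $\deg p=1$ is covered directly by Lemma~\ref{a}: after absorbing the constant $\overline{c_0}$ of $\overline p$ into $\varphi,\psi$, that lemma gives $f=cg$, and comparing $\overline z$-coefficients forces $c=1$ and hence $f=g$.
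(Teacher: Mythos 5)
Your reduction to $[B_f,B_h]=0$ with $h=\psi-\varphi$ is correct and is exactly the paper's starting point (the paper writes the equivalent identity $B_\varphi B_\psi+B_{\overline p}B_h=B_\psi B_\varphi+B_hB_{\overline p}$ on monomials), and your treatment of the case $\deg p=1$ via Lemma~\ref{a} is fine. But the heart of your plan for $\deg p=N\ge 2$ rests on a claim that is false: that the contributions of the lower-degree monomials $c_j\overline z^{\,j}$, $j<N$, ``appear identically on both sides of each coefficient equation and cancel when differences are formed.'' They do not. The term $B_{\overline p}B_h z^{2k}$ contributes, at the relevant coefficient, a sum $\sum_{j=1}^{N}\frac{2s+1}{2s+j+1}\,c_jd_{4s-2k+2j}$ whose coefficients depend on the running parameter $s$, while the term $B_hB_{\overline p}z^{2k}$ contributes $\sum_j \frac{2k-j+1}{2k+1}c_{j}d_{\dots}$ with a different index pairing; after differencing at neighbouring parameter values every product $c_jd_{m+2j}$ with $1\le j\le N$ survives with a nonzero, $j$-dependent coefficient of the form $\frac{8j(2j+1)}{(2t+j+1)(2t+j+3)(2t+j+5)}$. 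So the $2\times 2$ invertible-matrix trick of Lemma~\ref{b}, which isolates a single unknown, cannot isolate $c_Nd_{m+2N}$ here: you are left with one linear relation among the $N$ unknowns $c_1d_{m+2},\dots,c_Nd_{m+2N}$ for each choice of $t$.

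This is precisely where the paper's proof has genuinely new content beyond Lemmas~\ref{a} and~\ref{b}: it lets $t$ range over $N,N+1,\dots,2N+3$ to produce an $(N+4)\times N$ homogeneous system in these $N$ products, factors the coefficient matrix as a Hilbert-type matrix times a banded matrix of rank $N$, and invokes the rank inequality $r(AB)\ge r(A)+r(B)-(N+4)=N$ to conclude that the system has only the trivial solution, whence $c_1d_{m+2}=\dots=c_Nd_{m+2N}=0$ and then $d_{m+2N}=0$ from $c_N\ne 0$; iterating over the residues of $m$ eventually kills every $d_n$. Without some substitute for this rank argument (or another mechanism for disentangling the $N$ coupled products), your outline does not close for $N\ge 2$. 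A secondary, smaller point: Lemma~\ref{a} as stated requires $a,b\in\mathbb R$, so your base case implicitly assumes the leading coefficient of $p$ is real, and the ``scaling constant $a$ is effectively $1$'' sentence conflates the conclusion $\varphi=a\psi$ of Lemma~\ref{b} with the quite different target $h=0$ of the theorem.
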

\begin{proof}
Let us show the sufficiency first. If $f=g$, the result is trivial. To prove the necessity, we suppose that
$$\varphi(z)=\sum_{n=0}^{\infty}a_n z^n, \ \ \ \ \ \psi(z)=\sum_{m=0}^{\infty}b_m z^m, \ \ \ \ \ \ \overline{p}(z)=\sum_{j=0}^{N} c_{j} {\overline{z}}^j, $$
where $N\geq1$ is an integer. Note that $B_ {\overline{p}+\varphi}B_{\overline{p}+\psi}=B_{\overline{p}+\psi}B_{\overline{p}+\varphi}$ is equivalent to
$$B_{\varphi}B_{\psi}z^k+B_{\overline{p}}B_{\psi-\varphi}z^k=B_{\psi}B_{\varphi}z^k+B_{\psi-\varphi}B_{\overline{p}}z^k,\,k\in{\Z}^+.$$
For convenience, define
$$h(z):=\psi(z)-\varphi(z)=\sum_{i=0}^{\infty}(b_i-a_i) z^i:=\sum_{i=0}^{\infty}d_i z^i.$$
Then $B_{\overline{p}+\varphi}B_{\overline{p}+\psi}=B_{\overline{p}+\psi}B_{\overline{p}+\varphi}$ is equivalent to
$$B_{\varphi}B_{\psi}z^k+B_{\overline{p}}B_{h}z^k=B_{\psi}B_{\varphi}z^k+B_{h}B_{\overline{p}}z^k,\,k\in{\Z}^+.$$
Then we have
\begin{align*}
\begin{cases}
 B_\varphi B_\psi z^{k}=WT_\varphi WT_\psi z^{k}=WWT_{\varphi(z^2)}T_\psi z^{k}=WWT_{\varphi(z^2)\psi}z^{k},\\
 B_{\overline{p}}B_h z^{k}=WT_{\overline{p}}WT_h z^{k}, \\
 B_\psi B_\varphi z^{k}=WT_\psi WT_\varphi z^{k}=WWT_{\psi(z^2)}T_\varphi z^{k}=WWT_{\psi(z^2)\varphi}z^{k},\\
 B_h B_{\overline{p}} z^{k}=WT_h WT_{\overline{p}}z^{k} =WWT_{h(z^2)}T_{\overline{p}}z^{k}.
\end{cases}
\end{align*}
Since for every $k\geq0$, $B_\varphi B_\psi z^{k},B_{\overline{p}}B_h z^{k},B_\psi B_\varphi z^{k}$ and $B_h B_{\overline{p}} z^{k}$ are all analytic, letting
$$B_\varphi B_\psi z^{2k}+B_{\overline{p}}B_h z^{2k}:=\sum_{s=0}^{\infty}A_s z^s, \ \ \ \ B_\psi B_\varphi z^{2k}+B_h B_{\overline{p}}z^{2k}:=\sum_{s=0}^{\infty}B_s z^s.$$
The definition of the operator $W$ implies that
$$WWz^{4s}=Wz^{2s}=z^s,\,s\geq0.$$
So we only need to consider the $4s-th$ terms in  $T_{\varphi(z^2)\psi}z^{2k},\,T_{\psi(z^2)\varphi}z^{2k},\,T_{h(z^2)}T_{\overline{p}}z^{2k}$ and the $2s-th$ terms in $T_ {\overline{p}}WT_hz^{2k}$. Calculations show that  
\begin{align*}
T_{\varphi(z^2)\psi}z^{2k}&=P\left(\varphi(z^2)\psi z^{2k}\right)\\
&=\sum_{n=0}^{\infty}\sum_{m=0}^{\infty}a_nb_mz^{2n+m+2k}\\
&:=\sum_{s=0}^{\infty}A_{1,s}z^s,  
\end{align*}
\begin{align*}
T_{\overline{p}}WT_h z^{2k} &=T_{\overline{p}}W\left(\sum_{i=0}^{\infty}d_i z^i z^{2k}\right)\\
&=T_{\overline{p}}\left(\sum_{i=0}^{\infty}d_{2i}z^{i+k}\right)\\
      &=P\left(\sum_{j=0}^{N} c_{j} {\overline{z}}^j\sum_{i=0}^{\infty}d_{2i}z^{i+k}\right)\\
      &:=\sum_{s=0}^{\infty}A_{2,s}z^s,  
\end{align*}
\begin{align*}
T_{\psi(z^2)\varphi}z^{2k}&=P\left(\psi(z^2)\varphi z^{2k}\right)\\
&=\sum_{n=0}^{\infty}\sum_{m=0}^{\infty}a_nb_mz^{n+2m+2k}\\
&:=\sum_{s=0}^{\infty}B_{1,s}z^s,  
\end{align*}
\begin{align*}
T_{h(z^2)}T_{\overline{p}}z^{2k} &=T_{h(z^2)}P(\sum_{j=0}^{N} c_{j} {\overline{z}}^jz^{2k})\\
&=\sum_{i=0}^{\infty}d_iz^{2i}\sum_{j=0}^{K_{2k}}c_j \frac{2k+1-j}{2k+1}z^{2k-j}\\
&:=\sum_{s=0}^{\infty}B_{2,s}z^s,  
\end{align*}
where $K_{2k}=\min\{2k,N\}$. Using $B_ {\varphi}B_{\psi}z^k+B_{\overline{p}}B_{\psi-\varphi}z^k=B_{\psi}B_{\varphi}z^k+B_{\psi-\varphi}B_{\overline{p}}z^k$, we obtain that$A_s=B_s$ for all $s\geq0$. Therefore,
$$A_s=A_{1,4s}+A_{2,2s}=B_{1,4s}+B_{2,4s}=B_s.$$

Now we are going to calculate the coefficients $A_s$ and $B_s$ defined above. Indeed, we have
\begin{align*}
A_s &=A_{1,4s}+A_{2,2s}\\&=   
    \begin{cases}
    0, & 2s-k<-N,\vspace{2mm}\\
    c_Nd_0, & 2s-k=-N,\vspace{2mm}\\
     \ \ \ \ \ \ \ \ \cdots\vspace{2mm}\\
    a_0b_0+(c_0d_0+\frac{2s+1}{2s+2}c_1d_2+\frac{2s+1}{2s+3}c_2d_4+\cdots+\frac{2s+1}{2s+N+1}c_Nd_{2N}),& 2s-k=0,\vspace{2mm}\\
    a_1b_0+a_0b_2+(c_0d_2+\frac{2s+1}{2s+2}c_1d_4+\frac{2s+1}{2s+3}c_2d_6+\cdots+\frac{2s+1}{2s+N+1}c_Nd_{2N+2}),& 2s-k=1,\vspace{2mm}\\
    \ \ \ \ \ \ \ \ \cdots\vspace{2mm}\\
    a_{2s-k}b_0+a_{2s-k-1}b_2+\cdots+a_0b_{4s-2k}+(c_0d_{4s-2k}+\frac{2s+1}{2s+2}c_1d_{4s-2k+2}+\vspace{2mm}\\
 \frac{2s+1}{2s+3}c_2d_{4s-2k+4}+\cdots+\frac{2s+1}{2s+N+1}c_Nd_{4s-2k+2N})
    \end{cases}
\end{align*}
and
\begin{align*}
B_s &=B_{1,4s}+B_{2,4s}\\&=   
    \begin{cases}
    0, & 2s-k<-\frac{M_{2k}}{2},\vspace{2mm}\\
    c_{M_{2k}}d_0, & 2s-k=-\frac{M_{2k}}{2},\vspace{2mm}\\
     \ \ \ \ \ \ \ \ \cdots\vspace{2mm}\\
    a_0b_0+(c_0d_0+\frac{2k-1}{2k+1}c_2d_1+\frac{2k-3}{2k+1}c_4d_2+\cdots+\frac{2k-M_{2k}+1}{2k+1}c_{M_{2k}}d_{\frac{M_{2k}}{2}}),& 2s-k=0,\vspace{2mm}\\
    a_1b_0+a_0b_2+(c_0d_1+\frac{2k-1}{2k+1}c_2d_2+\frac{2k-3}{2k+1}c_4d_3+\cdots+\frac{2k-M_{2k}+1}{2k+1}c_{M_{2k}}d_{\frac{M_{2k}+2}{2}}),& 2s-k=1,\vspace{2mm}\\
    \ \ \ \ \ \ \ \ \cdots\vspace{2mm}\\
    a_0 b_{2s-k}+a_2b_{2s-k-1}+\cdots+a_{4s-2k}b_0+(c_0d_{2s-k}+\frac{2k-1}{2k+1}c_2d_{2s-k+1}+\vspace{2mm}\\
 \frac{2k-3}{2k+1}c_4d_{2s-k+2}+\cdots+\frac{2k-M_{2k}+1}{2k+1}c_{M_{2k}}d_{2s-k+\frac{M_{2k}}{2}}),
    \end{cases}
\end{align*}
where
\begin{align*}
M_{2k}=   
    \begin{cases}
    2k, & \mbox{if}\ 2k\leq N,\vspace{2mm}\\
    N-1, & \mbox{if}\ 2k>N, N\ \mbox{is odd},\vspace{2mm}\\
    N,   &\mbox{if}\ 2k>N, N\ \mbox{is even}.
    \end{cases}
\end{align*}
This implies that
\begin{equation}\label{9.1}
 \begin{cases}
 c_Nd_0=c_{M_{2k}}d_0,& 2s-k=-N,\vspace{2mm}\\
 \ \ \ \ \  \ \ \ \ \ \ \ \ \ \ \cdots\vspace{2mm}\\
 a_0b_0+(c_0d_0+\frac{2s+1}{2s+2}c_1d_2+\frac{2s+1}{2s+3}c_2d_4+\cdots+\frac{2s+1}{2s+N+1}c_Nd_{2N})\vspace{2mm}\\
=a_0b_0+(c_0d_0+\frac{2k-1}{2k+1}c_2d_1+\frac{2k-3}{2k+1}c_4d_2+\cdots+\frac{2k-M_{2k}+1}{2k+1}c_{M_{2k}}d_{\frac{M_{2k}}{2}}),&2s-k=0,\vspace{2mm}\\
a_1b_0+a_0b_2+(c_0d_2+\frac{2s+1}{2s+2}c_1d_4+\frac{2s+1}{2s+3}c_2d_6+\cdots+\frac{2s+1}{2s+N+1}c_Nd_{2N+2})\vspace{2mm}\\
=a_1b_0+a_0b_2+(c_0d_1+\frac{2k-1}{2k+1}c_2d_2+\frac{2k-3}{2k+1}c_4d_3+\cdots\vspace{2mm}\\
+\frac{2k-M_{2k}+1}{2k+1}c_{M_{2k}}d_{\frac{M_{2k}+2}{2}}),&  2s-k=1,\vspace{2mm}\\
a_{2s-k}b_0+a_{2s-k-1}b_2+\cdots+a_0b_{4s-2k}+(c_0d_{4s-2k}+\frac{2s+1}{2s+2}c_1d_{4s-2k+2}\vspace{2mm}\\ +\frac{2s+1}{2s+3}c_2d_{4s-2k+4}+\cdots+\frac{2s+1}{2s+N+1}c_Nd_{4s-2k+2N})\vspace{2mm}\\
=a_0 b_{2s-k}+a_2b_{2s-k-1}+\cdots+a_{4s-2k}b_0+(c_0d_{2s-k}+\frac{2k-1}{2k+1}c_2d_{2s-k+1}\vspace{2mm}\\ +\frac{2k-3}{2k+1}c_4d_{2s-k+2}+\cdots+\frac{2k-M_{2k}+1}{2k+1}c_{M_{2k}}d_{2s-k+\frac{M_{2k}}{2}}).
\end{cases}
\end{equation}

Similarly, we define
$$B_\varphi B_\psi z^{2k+1}+B_{\overline{p}}B_h z^{2k+1}:=\sum_{s=0}^{\infty}C_s z^s, \ \ \ \ B_\psi B_\varphi z^{2k+1}+B_h B_{\overline{p}}z^{2k+1}:=\sum_{s=0}^{\infty}D_s z^s.$$
We also need to consider the $4s-th$ terms in $T_ {\varphi(z^2)\psi}z^{2k+1},\,T_{\psi(z^2)\varphi}z^{2k+1},\,T_{h(z^2)}T_{\overline{p}}z^{2k+1}$ and the $2s-th$ terms in $T_ {\overline{p}}WT_hz^{2k+1}$. Calculations show that  
\begin{align*}
T_{\varphi(z^2)\psi}z^{2k+1}=P\left(\varphi(z^2)\psi z^{2k+1}\right)=\sum_{n=0}^{\infty}\sum_{m=0}^{\infty}a_nb_mz^{2n+m+2k+1}:=\sum_{s=0}^{\infty}C_{1,s}z^s,  
\end{align*}
\begin{align*}
T_{\overline{p}}WT_h z^{2k+1} &=T_{\overline{p}}W\left(\sum_{i=0}^{\infty}d_i z^i z^{2k+1}\right)\\
&=T_{\overline{p}}\left(\sum_{i=0}^{\infty}d_{2i+1}z^{i+k+1}\right)\\
      &=P\left(\sum_{j=0}^{N} c_{j} {\overline{z}}^j\sum_{i=0}^{\infty}d_{2i+1}z^{i+k+1}\right):=\sum_{s=0}^{\infty}C_{2,s}z^s,  
\end{align*}
\begin{align*}
T_{\psi(z^2)\varphi}z^{2k}=P\left(\psi(z^2)\varphi z^{2k+1}\right)=\sum_{n=0}^{\infty}\sum_{m=0}^{\infty}a_nb_mz^{n+2m+2k+1}:=\sum_{s=0}^{\infty}D_{1,s}z^s,  
\end{align*}
\begin{align*}
T_{h(z^2)}T_{\overline{p}}z^{2k}& =T_{h(z^2)}P(\sum_{j=0}^{N} c_{j} {\overline{z}}^jz^{2k+1})\\
&=\sum_{i=0}^{\infty}d_iz^{2i}\sum_{j=0}^{K_{2k+1}}c_j \frac{2k+2-j}{2k+2}z^{2k+1-j}\\
&:=\sum_{s=0}^{\infty}D_{2,s}z^s, 
\end{align*}
where $K_{2k+1}=\min\{2k+1,N\}$. Using $B_ {\varphi}B_{\psi}z^k+B_{\overline{p}}B_{\psi-\varphi}z^k=B_{\psi}B_{\varphi}z^k+B_{\psi-\varphi}B_{\overline{p}}z^k$, we have $C_s=D_s$, for all $s\geq0$. Therefore,
$$C_s=C_{1,4s}+C_{2,2s}=D_{1,4s}+D_{2,4s}=D_s.$$
We conclude that
\begin{align*}
C_s &=C_{1,4s}+C_{2,2s}\\&=   
    \begin{cases}
    0, & 2s-k-1<-N,\vspace{2mm}\\
    c_Nd_1, & 2s-k-1=-N,\vspace{2mm}\\
     \ \ \ \ \ \ \ \ \cdots\vspace{2mm}\\
    a_0b_1+(c_0d_1+\frac{2s+1}{2s+2}c_1d_3+\frac{2s+1}{2s+3}c_2d_5+\cdots+\frac{2s+1}{2s+N+1}c_Nd_{2N+1}),& 2s-k-1=0,\vspace{2mm}\\
    a_1b_1+a_0b_3+(c_0d_3+\frac{2s+1}{2s+2}c_1d_5+\frac{2s+1}{2s+3}c_2d_7+\cdots+\frac{2s+1}{2s+N+1}c_Nd_{2N+3}),& 2s-k-1=1,\vspace{2mm}\\
    \ \ \ \ \ \ \ \ \cdots\vspace{2mm}\\
    a_{2s-k-1}b_1+a_{2s-k-2}b_3+\cdots+a_0b_{4s-2k-1}+(c_0d_{4s-2k-1}+\frac{2s+1}{2s+2}c_1d_{4s-2k+1}+\vspace{2mm}\\
 \frac{2s+1}{2s+3}c_2d_{4s-2k+3}+\cdots+\frac{2s+1}{2s+N+1}c_Nd_{4s-2k+2N-1})
    \end{cases}
\end{align*}
and
\begin{align*}
D_s &=D_{1,4s}+D_{2,4s}\\&=   
    \begin{cases}
    0, & 2s-k-1<-\frac{L_{2k+1}+1}{2},\vspace{2mm}\\
    c_{L_{2k+1}}d_0, & 2s-k-1=-\frac{L_{2k+1}+1}{2},\vspace{2mm}\\
     \ \ \ \ \ \ \ \ \cdots\vspace{2mm}\\
    a_1b_0+(c_1d_1+\frac{2k-1}{2k+2}c_3d_2+\frac{2k-3}{2k+2}c_5d_3+\cdots\vspace{2mm}\\
    +\frac{2k-L_{2k+1}+2}{2k+2}c_{L_{2k+1}}d_{\frac{L_{2k+1}+1}{2}}),& 2s-k-1=0,\vspace{2mm}\\
    a_1b_0+a_3b_0+(c_1d_2+\frac{2k-1}{2k+2}c_3d_3+\frac{2k-3}{2k+2}c_5d_4+\cdots\vspace{2mm}\\
    +\frac{2k-L_{2k+1}+2}{2k+2}c_{L_{2k+1}}d_{\frac{L_{2k+1}+3}{2}}),& 2s-k-1=1,\vspace{2mm}\\
    \ \ \ \ \ \ \ \ \cdots\vspace{2mm}\\
    a_1 b_{2s-k-1}+a_3b_{2s-k-2}+\cdots+a_{4s-2k-1}b_0+(c_1d_{2s-k}\vspace{2mm}\\
+\frac{2k-1}{2k+2}c_3d_{2s-k+1}+\frac{2k-3}{2k+2}c_5d_{2s-k+2}+\cdots\vspace{2mm}\\
+\frac{2k-L_{2k+1}+2}{2k+2}c_{L_{2k+1}}d_{2s-\frac{2k+1-L_{2k+1}}{2}}),
    \end{cases}
\end{align*}
where
\begin{align*}
L_{2k+1}=   
    \begin{cases}
    2k+1, & \mbox{if}\ 2k+1\leq N,\vspace{2mm}\\
    N-1, & \mbox{if}\ 2k+1>N, N \ \mbox{is even},\vspace{2mm}\\
    N,&\mbox{if}\ 2k+1>N, N\ \mbox{is odd}.
    \end{cases}
\end{align*}
This follows that
\begin{equation}\label{9.2}
 \begin{cases}
 \ \ \ \ \  \ \ \ \ \ \ \ \ \ \ \cdots\vspace{2mm}\\
a_0b_1+(c_0d_1+\frac{2s+1}{2s+2}c_1d_3+\frac{2s+1}{2s+3}c_2d_5+\cdots+\frac{2s+1}{2s+N+1}c_Nd_{2N+1})\vspace{2mm}\\
=a_1b_0+(c_1d_1+\frac{2k-1}{2k+2}c_3d_2+\frac{2k-3}{2k+2}c_5d_3+\cdots+\frac{2k-L_{2k+1}+2}{2k+2}c_{L_{2k+1}}d_{\frac{L_{2k+1}+1}{2}}),& 2s-k-1=0,\vspace{2mm}\\
a_1b_1+a_0b_3+(c_0d_3+\frac{2s+1}{2s+2}c_1d_5+\frac{2s+1}{2s+3}c_2d_7+\cdots+\frac{2s+1}{2s+N+1}c_Nd_{2N+3})\vspace{2mm}\\
=a_1b_0+a_3b_0+(c_1d_2+\frac{2k-1}{2k+2}c_3d_3+\frac{2k-3}{2k+2}c_5d_4+\cdots+\frac{2k-L_{2k+1}+2}{2k+2}c_{L_{2k+1}}d_{\frac{L_{2k+1}+3}{2}}),& 2s-k-1=1,\vspace{2mm}\\
 \ \ \ \ \  \ \ \ \ \ \ \ \ \ \ \cdots\vspace{2mm}\\
a_{2s-k-1}b_1+a_{2s-k-2}b_3+\cdots+a_0b_{4s-2k-1}+(c_0d_{4s-2k-1}+\frac{2s+1}{2s+2}c_1d_{4s-2k+1}+\vspace{2mm}\\
\frac{2s+1}{2s+3}c_2d_{4s-2k+3}+\cdots+\frac{2s+1}{2s+N+1}c_Nd_{4s-2k+2N-1})\vspace{2mm}\\
=a_1 b_{2s-k-1}+a_3b_{2s-k-2}+\cdots+a_{4s-2k-1}b_0+(c_1d_{2s-k}+\frac{2k-1}{2k+2}c_3d_{2s-k+1}+\vspace{2mm}\\ \frac{2k-3}{2k+2}c_5d_{2s-k+2}+\cdots+\frac{2k-L_{2k+1}+2}{2k+2}c_{L_{2k+1}}d_{2s-\frac{2k+1-L_{2k+1}}{2}}).
\end{cases}
\end{equation}

Let us proceed in two steps.

\textbf{Step 1.} We first divide the proof into two substeps.

{\textbf{Substep 1.1}}
Suppose $2t>N$. By setting $s=t,k=2t;s=t+1,k=2t+2;s=t+2,k=2t+4$ in (\ref{9.1}) and (\ref{9.2}), we obtain the following equations:
\begin{align}\label{9.3}
 \begin{cases}
 &a_0b_0+(c_0d_0+\frac{2t+1}{2t+2}c_1d_2+\cdots+\frac{2t+1}{2t+N+1}c_Nd_{2N})\vspace{2mm}\\
 &=a_0b_0+(c_0d_0+\frac{4t-1}{4t+1}c_2b_1+\cdots+\frac{4t+1-M_{2k}}{4t+1}c_{M_{2k}}d_{\frac{M_{2k}}{2}}),\vspace{2mm}\\
 &\frac{2t+1}{2t+2}c_1d_1+\frac{2t+1}{2t+3}c_2d_3+\cdots+\frac{2t+1}{2t+N+1}c_Nd_{2N-1}\vspace{2mm}\\
 &=\frac{4t+1}{4t+2}c_1d_0+\frac{4t-1}{4t+2}c_3d_1+\cdots+\frac{4t+2-L_{2k+1}}{4t+2}c_{L_{2k+1}}d_{\frac{L_{2k+1}-1}{2}};
 \end{cases}
\end{align}
\begin{align}\label{9.4}
 \begin{cases}
 &a_0b_0+(c_0d_0+\frac{2t+3}{2t+4}c_1d_2+\cdots+\frac{2t+3}{2t+N+3}c_Nd_{2N})\\
 &=a_0b_0+(c_0d_0+\frac{4t+3}{4t+5}c_2b_1+\cdots+\frac{4t+5-M_{2k}}{4t+5}c_{M_{2k}}d_{\frac{M_{2k}}{2}}),\vspace{1mm}\\
 &\frac{2t+3}{2t+4}c_1d_1+\frac{2t+3}{2t+5}c_2d_3+\cdots+\frac{2t+3}{2t+N+3}c_Nd_{2N-1}\\
 &=\frac{4t+5}{4t+6}c_1d_0+\frac{4t+3}{4t+6}c_3d_1+\cdots+\frac{4t+6-L_{2k+1}}{4t+6}c_{L_{2k+1}}d_{\frac{L_{2k+1}-1}{2}};
 \end{cases}
\end{align}
\begin{align}\label{9.5}
 \begin{cases}
 &a_0b_0+(c_0d_0+\frac{2t+5}{2t+6}c_1d_2+\cdots+\frac{2t+5}{2t+N+5}c_Nd_{2N})\\
 &=a_0b_0+(c_0d_0+\frac{4t+7}{4t+9}c_2b_1+\cdots+\frac{4t+9-M_{2k}}{4t+9}c_{M_{2k}}d_{\frac{M_{2k}}{2}}),\vspace{1mm}\\
 &\frac{2t+5}{2t+6}c_1d_1+\frac{2t+5}{2t+7}c_2d_3+\cdots+\frac{2t+5}{2t+N+5}c_Nd_{2N-1}\\
 &=\frac{4t+9}{4t+10}c_1d_0+\frac{4t+7}{4t+10}c_3d_1+\cdots+\frac{4t+10-L_{2k+1}}{4t+10}c_{L_{2k+1}}d_{\frac{L_{2k+1}-1}{2}}.
 \end{cases}
\end{align}
Subtracting the first equation of (\ref{9.3}) from the first equation of (\ref{9.4}), and subtracting the first equation of (\ref{9.4}) from the first equation of (\ref{9.5}) yields that
\begin{align*}
\begin{split}
&\left(\frac{2t+3}{2t+4}-\frac{2t+1}{2t+2}\right)c_1d_2+\cdots+\left(\frac{2t+3}{2t+N+3}-\frac{2t+1}{2t+N+1}\right)c_Nd_{2N}\\
&=\frac{8}{(4t+1)(4t+5)}c_2d_1+\frac{16}{(4t+5)(4t+1)}c_4d_2+\cdots+\frac{4M_{2k}}{(4t+5)(4t+1)}c_{M_{2k}}d_{\frac{M_{2k}}{2}};
 \end{split}
\end{align*}
\begin{align*}
\begin{split}
&\left(\frac{2t+5}{2t+6}-\frac{2t+3}{2t+4}\right)c_1d_2+\cdots+\left(\frac{2t+5}{2t+N+5}-\frac{2t+3}{2t+N+3}\right)c_Nd_{2N}\\
&=\frac{8}{(4t+5)(4t+9)}c_2d_1+\frac{16}{(4t+9)(4t+5)}c_4d_2+\cdots+\frac{4M_{2k}}{(4t+9)(4t+5)}c_{M_{2k}}d_{\frac{M_{2k}}{2}}.
 \end{split}
\end{align*}
This implies that
\begin{align*}
\begin{split}
&(4t+1)\left[\left(\frac{2t+3}{2t+4}-\frac{2t+1}{2t+2}\right)c_1d_2+\cdots+\left(\frac{2t+3}{2t+N+3}-\frac{2t+1}{2t+N+1}\right)c_Nd_{2N}\right]\\
&=(4t+9)\left[\left(\frac{2t+5}{2t+6}-\frac{2t+3}{2t+4}\right)c_1d_2+\cdots+\left(\frac{2t+5}{2t+N+5}-\frac{2t+3}{2t+N+3}\right)c_Nd_{2N}\right].
 \end{split}
\end{align*}
This equation is equals to
\begin{align*}
\begin{split}
&\left[(4t+9)\left(\frac{2t+5}{2t+6}-\frac{2t+3}{2t+4}\right)-(4t+1)\left(\frac{2t+3}{2t+4}-\frac{2t+1}{2t+2}\right)\right]c_1d_2+\cdots\\
&\ \ +\bigg[(4t+9)\Big(\frac{2t+5}{2t+N+5}-\frac{2t+3}{2t+N+3}\Big)\\
&\ \ -(4t+1)\Big(\frac{2t+3}{2t+N+3}-\frac{2t+1}{2t+N+1}\Big)\bigg]c_Nd_{2N}=0.
\end{split}
\end{align*}
Elementary calculations give us that the coefficient of $c_jd_{2j}$ is
\begin{align*}
\begin{split}
&(4t+9)\left(\frac{2t+5}{2t+j+5}-\frac{2t+3}{2t+j+3}\right)-(4t+1)\left(\frac{2t+3}{2t+j+3}-\frac{2t+1}{2t+j+1}\right)\\
&=\frac{8j(2j+1)}{(2t+j+1)(2t+j+3)(2t+j+5)}\\
&=8j(2j+1)\left(\frac{1}{8(2t+j+1)}-\frac{1}{4(2t+j+3)+\frac{1}{(2t+j+5)}}\right).
\end{split}
\end{align*}
Letting $t=N,N+1,\cdots,2N-1,\cdots,2N+3$, we will get a homogeneous system of linear equations of $c_1d_2,c_2d_4,\cdots,c_Nd_{2N}$, and the coefficient matrix of the system is
\begin{align}\label{9.7}
\left( \begin{array} {c c c c c c}
{{\frac{1}{(2N+2)(2N+4)(2N+6)}}} & {{\frac{1}{(2N+3)(2N+5)(2N+7)}}} & {{\cdots}} & {{\frac{1}{(2N+N+1)(2N+N+3)(2N+N+5)}}}\\ 
{{\frac{1}{(2N+4)(2N+6)(2N+8)}}} & {{\frac{1}{(2N+5)(2N+7)(2N+9)}}} & {{\cdots}} & {{\frac{1}{(2N+N+3)(2N+N+5)(2N+N+7)}}}\\ 
{{\cdots}} & {{\cdots}} & {{\cdots}} & {{\cdots}}\\ 
{{\frac{1}{4N(4N+2)(4N+4)}}} & {{\frac{1}{(4N+1)(4N+3)(4N+5)}}} & {{\cdots}} & {{\frac{1}{(5N-1)(5N+1)(5N+3)}}}\\ 
{{\cdots}} & {{\cdots}} & {{\cdots}} & {{\cdots}}\\ 
{{\frac{1}{(4N+8)(4N+10)(4N+12)}}} & {{\frac{1}{(4N+9)(4N+11)(4N+13)}}} & {{\cdots}} & {{\frac{1}{(5N+7)(5N+9)(5N+11)}}}\\ 
\end{array} \right),
\end{align}
denoted these $N+4$ row vectors by $\alpha_1,\alpha_2,\cdots,\alpha_{N+4}$, thus, we have
\begin{align*}
  \left(\begin{array}{c}
          \alpha_1 \\
          \alpha_2 \\
          \alpha_3 \\
          \cdots \\
          \alpha_{N+4} 
        \end{array}
  \right)
  =
  &\left(
  \begin{array}{ccccccc}
    \frac{1}{2N+2} & \frac{1}{2N+3} & \frac{1}{2N+4} & \cdots & \frac{1}{2N+N+1} & \frac{1}{2N+N+3} & \frac{1}{2N+N+5} \\
    \frac{1}{2N+4} & \frac{1}{2N+5} & \frac{1}{2N+6} & \cdots & \frac{1}{2N+N+3} & \frac{1}{2N+N+5} & \frac{1}{2N+N+7} \\
    \frac{1}{2N+6} & \frac{1}{2N+7} & \frac{1}{2N+8} & \cdots & \frac{1}{2N+N+5} & \frac{1}{2N+N+7} & \frac{1}{2N+N+9} \\
    \cdots & \cdots & \cdots &   & \cdots & \cdots & \cdots \\
    \frac{1}{4N+8} & \frac{1}{4N+12} & \frac{1}{4N+16} & \cdots & \frac{1}{5N+7} & \frac{1}{5N+9} & \frac{1}{5N+11} 
  \end{array}
  \right)\\
  &\left(\begin{array}{ccccc}
          \frac{1}{8} &   &   &   &   \\
           0 & \frac{1}{8} &   &   &   \\
          -\frac{1}{4} & 0 & \frac{1}{8} &   &   \\
          0 & -\frac{1}{4} & 0 & \ddots & \frac{1}{8} \\
          \frac{1}{8} & 0 & -\frac{1}{4} & \ddots & 0 \\
            & \frac{1}{8} & 0 & \ddots & -\frac{1}{4} \\
            &   & \frac{1}{8} & \ddots & 0 \\
            &   &   &   & \frac{1}{8} 
        \end{array}
  \right)\\
  &:=AB.
\end{align*}
Note that $A$ is a Hilbert matrix, the rank of $A$ is $N+4$, obviously, the rank of $B$ is $N$. By the inequality of rank about matrix, we have
$$r(AB)\geq r(A)+r(B)-(N+4)=N+4+N-(N+4)=N.$$

Consequently, we may select $N$ linearly independent row vectors from the set $\alpha_1,\alpha_2,\cdots,\alpha_{N+4}$, denoted as $\alpha_ {i_1},\alpha_{i_2},\cdots,\alpha_{i_N}$. This selection induces a homogeneous linear system of $N$ equations in the variables $c_1d_2,c_2d_4,\cdots,c_Nd_{2N}$. Since the coefficient matrix of this system has full rank $N$, the system admits only the trivial solution. Therefore, we conclude that:  
$$c_1d_2=c_2d_4=\cdots=c_Nd_ {2N}=0.$$

Similarly, subtracting the second equation of (\ref{9.3}) from the second equation of (\ref{9.4}), and subtracting the second equation of (\ref{9.4}) from the second equation of (\ref{9.5}) yields that
\begin{align*}
\begin{split}
&\left(\frac{2t+3}{2t+4}-\frac{2t+1}{2t+2}\right)c_1d_1+\cdots+\left(\frac{2t+3}{2t+N+3}-\frac{2t+1}{2t+N+1}\right)c_Nd_{2N-1}\\
&=\frac{4}{(4t+2)(4t+6)}c_1d_0+\frac{12}{(4t+2)(4t+6)}c_3d_1+\cdots+\frac{4L_{2k+1}}{(4t+2)(4t+6)}c_{L_{2k+1}}d_{\frac{L_{2k+1}}{2}};
 \end{split}
\end{align*}
\begin{align*}
\begin{split}
&\left(\frac{2t+5}{2t+6}-\frac{2t+3}{2t+4}\right)c_1d_1+\cdots+\left(\frac{2t+5}{2t+N+5}-\frac{2t+3}{2t+N+3}\right)c_Nd_{2N-1}\\
&=\frac{4}{(4t+6)(4t+10)}c_1d_0+\frac{12}{(4t+6)(4t+10)}c_3d_1+\cdots+\frac{4L_{2k+1}}{(4t+6)(4t+10)}c_{L_{2k+1}}d_{\frac{L_{2k+1}-1}{2}}.
 \end{split}
\end{align*}
This implies that
\begin{align*}
\begin{split}
&(4t+2)\left[\left(\frac{2t+3}{2t+4}-\frac{2t+1}{2t+2}\right)c_1d_1+\cdots+\left(\frac{2t+3}{2t+N+3}-\frac{2t+1}{2t+N+1}\right)c_Nd_{2N-1}\right]\\
&=(4t+10)\left[\left(\frac{2t+5}{2t+6}-\frac{2t+3}{2t+4}\right)c_1d_1+\cdots+\left(\frac{2t+5}{2t+N+5}-\frac{2t+3}{2t+N+3}\right)c_Nd_{2N-1}\right].
 \end{split}
\end{align*}
This equation is equals to
\begin{align}\label{9.8}
\begin{split}
&\left[(4t+10)\left(\frac{2t+5}{2t+6}-\frac{2t+3}{2t+4}\right)-(4t+2)\left(\frac{2t+3}{2t+4}-\frac{2t+1}{2t+2}\right)\right]c_1d_2+\cdots\vspace{2mm}\\
&\ \ +\bigg[(4t+10)\Big(\frac{2t+5}{2t+N+5}-\frac{2t+3}{2t+N+3}\Big)\\
&\ \ -(4t+2)\Big(\frac{2t+3}{2t+N+3}-\frac{2t+1}{2t+N+1}\Big)\bigg]c_Nd_{2N-1}=0.
\end{split}
\end{align}
Elementary calculations give us that the coefficient of $c_jd_{2j-1}$ is
\begin{align*}
\begin{split}
&(4t+10)\left(\frac{2t+5}{2t+j+5}-\frac{2t+3}{2t+j+3}\right)-(4t+2)\left(\frac{2t+3}{2t+j+3}-\frac{2t+1}{2t+j+1}\right)\\
&=\frac{16j^2}{(2t+j+1)(2t+j+3)(2t+j+5)}\\
&=16j^2\left(\frac{1}{8(2t+j+1)}-\frac{1}{4(2t+j+3)+\frac{1}{(2t+j+5)}}\right).
\end{split}
\end{align*}

Consider $t=N,N+1,\cdots,2N-1,\cdots,2N+3$. This generates a homogeneous linear system in the variables $c_1d_1,c_2d_3,\cdots,c_Nd_{2N-1}$, with its coefficient matrix identical to that of system (\ref{9.7}). Solving this system yields:  
$$c_1d_1=c_2d_3=\cdots=c_Nd_ {2N-1}=0.$$
Since the analytic polynomial is of degree $N$, we have $c_N\neq 0$. Consequently,
$$d_{2N}=d_{2N-1}=0.$$

{\textbf{Substep 1.2}} Suppose $2t>N$. Letting $s=t,k=2t+1;s=t+1,k=2t+3;s=t+2,k=2t+5$ in (\ref{9.1}) and (\ref{9.2}), we get the followig equations:
\begin{align}\label{9.9}
 \begin{cases}
 &\frac{2t+1}{2t+2}c_1d_0+\cdots+\frac{2t+1}{2t+N+1}c_Nd_{2N-2}=\frac{4t+1}{4t+3}c_02d_0+\cdots+\frac{4t+3-M_{2k}}{4t+3}c_{M_{2k}}d_{\frac{M_{2k}}{2}-1},\vspace{2mm}\\
 &\frac{2t+1}{2t+3}c_2d_1+\frac{2t+1}{2t+4}c_3d_3+\cdots+\frac{2t+1}{2t+N+1}c_Nd_{2N-3}\vspace{2mm}\\
 &=\frac{4t+1}{4t+4}c_3d_0+\frac{4t-1}{4t+4}c_5d_1+\cdots+\frac{4t+4-L_{2k+1}}{4t+4}c_{L_{2k+1}}d_{\frac{L_{2k+1}-3}{2}};
 \end{cases}
\end{align}
\begin{align}\label{9.10}
 \begin{cases}
 &\frac{2t+3}{2t+4}c_1d_0+\cdots+\frac{2t+3}{2t+N+3}c_Nd_{2N-2}=\frac{4t+5}{4t+7}c_2d_0+\cdots+\frac{4t+7-M_{2k}}{4t+7}c_{M_{2k}}d_{\frac{M_{2k}}{2}-1},\vspace{2mm}\\
 &\frac{2t+3}{2t+5}c_1d_1+\frac{2t+3}{2t+5}c_2d_3+\cdots+\frac{2t+3}{2t+N+3}c_Nd_{2N-1}\vspace{2mm}\\
 &=\frac{4t+5}{4t+6}c_1d_0+\frac{4t+3}{4t+6}c_3d_1+\cdots+\frac{4t+6-L_{2k+1}}{4t+6}c_{L_{2k+1}}d_{\frac{L_{2k+1}-3}{2}};
 \end{cases}
\end{align}
\begin{align}\label{9.11}
 \begin{cases}
 &\frac{2t+5}{2t+6}c_1d_0+\cdots+\frac{2t+5}{2t+N+5}c_Nd_{2N-2})=\frac{4t+9}{4t+11}c_2d_0+\cdots+\frac{4t+11-M_{2k}}{4t+11}c_{M_{2k}}d_{\frac{M_{2k}}{2}-1},\vspace{2mm}\\
 &\frac{2t+5}{2t+7}c_2d_1+\frac{2t+5}{2t+8}c_3d_3+\cdots+\frac{2t+5}{2t+N+5}c_Nd_{2N-3}\\
 &=\frac{4t+9}{4t+12}c_3d_0+\frac{4t+7}{4t+12}c_5d_1+\cdots+\frac{4t+10-L_{2k+1}}{4t+10}c_{L_{2k+1}}d_{\frac{L_{2k+1}-3}{2}}.
 \end{cases}
\end{align}
Subtracting the first equation of (\ref{9.9}) from the first equation of (\ref{9.10}), and subtracting the first equation of (\ref{9.10}) from the first equation of (\ref{9.11}) yields that
\begin{align*}
\begin{split}
&\left(\frac{2t+3}{2t+4}-\frac{2t+1}{2t+2}\right)c_1d_0+\cdots+\left(\frac{2t+3}{2t+N+3}-\frac{2t+1}{2t+N+1}\right)c_Nd_{2N-2}\\
&=\frac{8}{(4t+3)(4t+7)}c_2d_1+\frac{16}{(4t+3)(4t+7)}c_4d_2+\cdots+\frac{4M_{2k}}{(4t+3)(4t+7)}c_{M_{2k}}d_{\frac{M_{2k}-2}{2}};
 \end{split}
\end{align*}
\begin{align*}
\begin{split}
&\left(\frac{2t+5}{2t+6}-\frac{2t+3}{2t+4}\right)c_1d_0+\cdots+\left(\frac{2t+5}{2t+N+5}-\frac{2t+3}{2t+N+3}\right)c_Nd_{2N-2}\\
&=\frac{8}{(4t+7)(4t+11)}c_2d_1+\frac{16}{(4t+7)(4t+11)}c_4d_2+\cdots+\frac{4M_{2k}}{(4t+7)(4t+11)}c_{M_{2k}}d_{\frac{M_{2k}-2}{2}}.
 \end{split}
\end{align*}
This implies that
\begin{align*}
\begin{split}
&(4t+3)\left[\left(\frac{2t+3}{2t+4}-\frac{2t+1}{2t+2}\right)c_1d_0+\cdots+\left(\frac{2t+3}{2t+N+3}-\frac{2t+1}{2t+N+1}\right)c_Nd_{2N-2}\right]\\
&=(4t+11)\left[\left(\frac{2t+5}{2t+6}-\frac{2t+3}{2t+4}\right)c_1d_0+\cdots+\left(\frac{2t+5}{2t+N+5}-\frac{2t+3}{2t+N+3}\right)c_Nd_{2N-2}\right].
 \end{split}
\end{align*}
This equation is equals to
\begin{align*}
\begin{split}
&\left[(4t+11)\left(\frac{2t+5}{2t+6}-\frac{2t+3}{2t+4}\right)-(4t+3)\left(\frac{2t+3}{2t+4}-\frac{2t+1}{2t+2}\right)\right]c_1d_0+\cdots\\
&+\left[(4t+11)\left(\frac{2t+5}{2t+N+5}-\frac{2t+3}{2t+N+3}\right)-(4t+3)\left(\frac{2t+3}{2t+N+3}-\frac{2t+1}{2t+N+1}\right)\right]c_Nd_{2N-2}=0.
\end{split}
\end{align*}
By the similar method as equation (\ref{9.8}), we have
$$c_1d_0=c_2d_2=\cdots=c_Nd_{2N-2}=0.$$

Similarly, subtracting the second equation of (\ref{9.9}) from the second equation of (\ref{9.10}), and subtracting the second equation of (\ref{9.10}) from the second equation of (\ref{9.11}) yields that
\begin{align*}
\begin{split}
&\left[(4t+12)\left(\frac{2t+5}{2t+7}-\frac{2t+3}{2t+5}\right)-(4t+4)\left(\frac{2t+3}{2t+5}-\frac{2t+1}{2t+3}\right)\right]c_2d_1+\cdots\\
&+\left[(4t+12)\left(\frac{2t+5}{2t+N+5}-\frac{2t+3}{2t+N+3}\right)-(4t+4)\left(\frac{2t+3}{2t+N+3}-\frac{2t+1}{2t+N+1}\right)\right]c_Nd_{2N-3}=0.
\end{split}
\end{align*}
This implies  
$$c_2d_1=c_3d_4=\cdots=c_Nd_{2N-3}=0.$$ 
Given that the analytic polynomial is of degree $N$, we have $c_N\neq 0$. Consequently, it follows that  
$$d_{2N-2}=d_{2N-3}=0.$$ 

Iterating this procedure by successively assigning:  
$s=t+N,k=2t+N;s=t+N+1,k=2t+N+2;s=t+N+2,k=2t+N+4,$ 
to equations (\ref{9.1}) and (\ref{9.2}), we ultimately derive  
$$d_0=d_1=\cdots=d_{2N}=0.$$  

\textbf{Step 2.}
By setting $s=t+N,k=2t+N;s=t+N+1,k=2t+N+2;s=t+N+2,k=2t+N+4$ in (\ref{9.1}) and (\ref{9.2}), we get the following equations:
\begin{align}\label{9.12}
\begin{cases}
  &a_Nb_0+a_{N-1}b_2+\cdots+a_0b_{2N}+(c_0d_{2N}+\frac{2(t+N)+1}{2(t+N)+2}c_1d_{2N+2}\vspace{2mm}\\
  &+\frac{2(t+N)+1}{2(t+N)+3}c_2d_{2N+4}+\cdots+\frac{2(t+N)+1}{2(t+N)+N+1}c_Nd_{4N}) \vspace{2mm}\\
  &=a_0b_N+a_2b_{N-1}+\cdots+a_{2N}d_0+(c_0d_N+\frac{2(2t+N)-1}{2(2t+N)+1}c_2d_{N+1}\vspace{2mm}\\
  &+\frac{2(2t+N)-3}{2(2t+N)+1}c_4d_{N+2}+\cdots+\frac{2(2t+N)-M_{2k}+1}{2(2t+N)+1}c_{M_{2k}}d_{N+\frac{M_{2k}}{2}}),\vspace{2mm}\\
  &a_{N-1}b_1+a_{N-2}b_3+\cdots+a_0b_{2N-1}+(c_0d_{2N-1}+\frac{2(t+N)+1}{2(t+N)+2}c_1d_{2N+1}\vspace{2mm}\\
  &+\frac{2(t+N)+1}{2(t+N)+3}c_2d_{2N+3}+\cdots+\frac{2(t+N)+1}{2(t+N)+N+1}c_Nd_{4N-1})\vspace{2mm} \\
  &=a_0b_{2N-1}+a_3b_{2N-2}+\cdots+a_{2N-1}d_0+(c_1d_N+\frac{2(2t+N)-1}{2(2t+N)+2}c_3d_{N+1}\vspace{2mm}\\
  &+\frac{2(2t+N)-3}{2(2t+N)+2}c_5d_{N+2}+\cdots+\frac{2(2t+N)-L_{2k+1}+2}{2(2t+N)+2}c_{L_{2k+1}}d_{N+\frac{L_{2k+1}+1}{2}-1});
\end{cases}
\end{align}
\begin{align}\label{9.13}
\begin{cases}
  &a_Nb_0+a_{N-1}b_2+\cdots+a_0b_{2N}+(c_0d_{2N}+\frac{2(t+N+1)+1}{2(t+N+1)+2}c_1d_{2N+2}\vspace{2mm}\\
  &+\frac{2(t+N+1)+1}{2(t+N+1)+3}c_2d_{2N+4}+\cdots+\frac{2(t+N+1)+1}{2(t+N+1)+N+1}c_Nd_{4N}) \vspace{2mm}\\
  &=a_0b_N+a_2b_{N-1}+\cdots+a_{2N}d_0+(c_0d_N+\frac{2(2t+N+2)-1}{2(2t+N+2)+1}c_2d_{N+1}\vspace{2mm}\\
  &+\frac{2(2t+N+2)-3}{2(2t+N+2)+1}c_4d_{N+2}+\cdots+\frac{2(2t+N+2)-M_{2k}+1}{2(2t+N+2)+1}c_{M_{2k}}d_{N+\frac{M_{2k}}{2}}),\vspace{2mm}\\
  &a_{N-1}b_1+a_{N-2}b_3+\cdots+a_0b_{2N-1}+(c_0d_{2N-1}+\frac{2(t+N+1)+1}{2(t+N+1)+2}c_1d_{2N+1}\vspace{2mm}\\
  &+\frac{2(t+N+1)+1}{2(t+N+1)+3}c_2d_{2N+3}+\cdots+\frac{2(t+N+1)+1}{2(t+N+1)+N+1}c_Nd_{4N-1}) \vspace{2mm}\\
  &=a_0b_{2N-1}+a_3b_{2N-2}+\cdots+a_{2N-1}d_0+(c_1d_N+\frac{2(2t+N+2)-1}{2(2t+N+2)+2}c_3d_{N+1}\vspace{2mm}\\
  &+\frac{2(2t+N+2)-3}{2(2t+N+2)+2}c_5d_{N+2}+\cdots+\frac{2(2t+N+2)-L_{2k+1}+2}{2(2t+N+2)+2}c_{L_{2k+1}}d_{N+\frac{L_{2k+1}+1}{2}-1});
\end{cases}
\end{align}
\begin{align}\label{9.14}
\begin{cases}
  &a_Nb_0+a_{N-1}b_2+\cdots+a_0b_{2N}+(c_0d_{2N}+\frac{2(t+N+2)+1}{2(t+N+2)+2}c_1d_{2N+2}\\
  &+\frac{2(t+N+2)+1}{2(t+N+2)+3}c_2d_{2N+4}+\cdots+\frac{2(t+N+2)+1}{2(t+N+2)+N+1}c_Nd_{4N}) \\
  &=a_0b_N+a_2b_{N-1}+\cdots+a_{2N}d_0+(c_0d_N+\frac{2(2t+N+4)-1}{2(2t+N+4)+1}c_2d_{N+1}\\
  &+\frac{2(2t+N+4)-3}{2(2t+N+4)+1}c_4d_{N+2}+\cdots+\frac{2(2t+N+4)-M_{2k}+1}{2(2t+N+4)+1}c_{M_{2k}}d_{N+\frac{M_{2k}}{2}}),\vspace{2mm}\\
  &a_{N-1}b_1+a_{N-2}b_3+\cdots+a_0b_{2N-1}+(c_0d_{2N-1}+\frac{2(t+N+2)+1}{2(t+N+2)+2}c_1d_{2N+1}\\
  &+\frac{2(t+N+2)+1}{2(t+N+2)+3}c_2d_{2N+3}+\cdots+\frac{2(t+N+2)+1}{2(t+N+2)+N+1}c_Nd_{4N-1}) \\
  &=a_0b_{2N-1}+a_3b_{2N-2}+\cdots+a_{2N-1}d_0+(c_1d_N+\frac{2(2t+N+4)-1}{2(2t+N+4)+2}c_3d_{N+1}\\
  &+\frac{2(2t+N+4)-3}{2(2t+N+4)+2}c_5d_{N+2}+\cdots+\frac{2(2t+N+4)-L_{2k+1}+2}{2(2t+N+4)+2}c_{L_{2k+1}}d_{N+\frac{L_{2k+1}+1}{2}-1}).
\end{cases}
\end{align}
Subtracting the first equation of (\ref{9.12}) from the first equation of (\ref{9.13}), and subtracting the first equation of (\ref{9.13}) from the first equation of (\ref{9.14}) yields that
\begin{align*}
\begin{split}
&\left(\frac{2(t+N+1)+1}{2(t+N+1)+2}-\frac{2(t+N)+1}{2(t+N)+2}\right)c_1d_{2N+2}+\cdots
+\Big(\frac{2(t+N+1)+1}{2(t+N+1)+N+1}\\
&\ \ \ \ \ \ \ \ \ \ \ \ \ \ -\frac{2(t+N)+1}{2(t+N)+N+1}\Big)c_Nd_{4N}\\ &=\frac{8c_2d_{N+1}+16c_4d_{N+2}+\cdots+4M_{2k}c_{M_{2k}}d_{N+\frac{M_{2k}-2}{2}}}{[2(2t+N+2)+1][2(2t+N)+1]};
\end{split}
\end{align*}
\begin{align*}
\begin{split}
&\left(\frac{2(t+N+2)+1}{2(t+N+2)+2}-\frac{2(t+N+1)+1}{2(t+N+1)+2}\right)c_1d_{2N+2}+\cdots
+\Big(\frac{2(t+N+2)+1}{2(t+N+2)+N+1}\\
&\ \ \ \ \ \ \ \ \ \ \ \ \ \ -\frac{2(t+N+1)+1}{2(t+N+1)+N+1}\Big)c_Nd_{4N}\\
&=\frac{8c_2d_{N+1}+16c_4d_{N+2}+\cdots+4M_{2k}c_{M_{2k}}d_{N+\frac{M_{2k}-2}{2}}}{[2(2t+N+4)+1][2(2t+N+2)+1]}.
 \end{split}
\end{align*}

By the similar method as equation (\ref{9.8}), we have
$$c_1d_{2N+2}=c_2d_{2N+4}=\cdots=c_Nd_{4N}=0.$$

Similarly, subtracting the second equation of (\ref{9.12}) from the second equation of (\ref{9.13}), and subtracting the second equation of (\ref{9.13}) from the second equation of (\ref{9.14}) yields that
$$c_1d_{2N+1}=c_2d_{2N+3}=\cdots=c_Nd_{4N-1}=0,$$
as the order of analytic polynomial is $N$, then $c_N\neq 0$, thus we have $d_{4N}=d_{4N-1}=0.$

Keep doing this process, until let $s=t+N,k=2t+2N;s=t+N+1,k=2t+2N+2;s=t+N+2,k=2t+2N+4,$ respectively, in (\ref{9.1}) and (\ref{9.2}), we finally conclude that
$$d_{2N+1}=d_{2N+2}=\cdots=d_{4N}=0.$$

Finally, we will get $d_j=0$, for every $j\geq 0$, this means that $h=0$, thus $\varphi=\psi$.
\end{proof}
\begin{rem}
Theorem \ref{c} was also obtained using a different mathod in \cite{[11]}.
\end{rem}

\end{document}